\theoremstyle{plain}
\newtheorem{theorem}{Theorem}[section]
\newtheorem{prop}[theorem]{Proposition}
\newtheorem{lemma}[theorem]{Lemma}
\newtheorem{cor}[theorem]{Corollary}
\theoremstyle{definition}
\newtheorem{defin}[theorem]{Definition}
\theoremstyle{remark}
\newcommand{\QQ}{\sheaf{Q}}
\newcommand{\Z}{\mathbb Z}
\newcommand{\Q}{\mathbb Q}
\DeclareSymbolFont{cyrletters}{OT2}{wncyr}{m}{n}
\DeclareMathSymbol{\Sha}{\mathalpha}{cyrletters}{"58}
\begin{document}

\title[ strong $u$-invariant and period-index bound]
{Strong $u$-invariant and Period-Index bound for complete ultrametric fields}

\author[Shilpi Mandal]{Shilpi Mandal}
\address{Department of Mathematics\\ 
Emory University \\ 
400 Dowman Drive~NE \\ 
Atlanta, GA 30322, USA}
\email{shilpi.mandal@emory.edu}

\date{}

\begin{abstract}  Let $k$ be a complete ultrametric field with $\text{dim}_\Q {\sqrt{\mid k^* \mid}} = n$ finite, with residue field $\Tilde{k}$, and char$(\Tilde{k}) \neq 2$. We prove that $u(k) \leq 2^n u(\Tilde{k})$. Let $u_s(k)$ be the strong $u$-invariant of $k$, then we further show that $u_s(k) \leq 2^n u_s(\Tilde{k})$. Let $l$ be a prime such that the gcd(char$(\Tilde{k}), l$) $= 1$. If the $\text{Br}_l \text{dim} (\Tilde{k}) = d$, then we also show that $\text{Br}_l \text{dim} (k) \leq d + n$. Let $C$ be a curve over $k$ and $F = k(C)$. Then we show that for any quadratic form over $F$ with dimension $> 2^{n+1} u_s (\Tilde{k})$ is isotropic over $F$. We further show that if $\text{Br}_l \text{dim}(\Tilde{k}) \leq d$ and $\text{Br}_l \text{dim} (\Tilde{k}(T)) \leq d+1$, then $\text{Br}_l \text{dim}(F) \leq d+n+1$.
 
\end{abstract} 
 
\maketitle

\def\ZZ{${\mathbf Z}$}
\def\ih{${\mathbf H}$}
\def\RR{${\mathbf R}$}
\def\IF{${\mathbf F}$}
\def\QQ{${\mathbf Q}$}
\def\IP{${\mathbf P}$}

\date{}

\section{Introduction}

Let $k$ be a field with $\text{char}(k) \neq 2$. The \textit{$u$-invariant of $k$}, denoted by $u(k)$, is the maximal dimension of anisotropic quadratic forms over $k$. We say that $u(k) = \infty$ if there exist anisotropic quadratic forms over $k$ of arbitrarily large dimension. For example, $u$-invariant of an algebraic closed field $K$, $u(K) = 1$; $u(\mathbb{R}) = \infty$; for $k$ a finite field, $u(k) = 2$, etc. The $u$-invariant is a positive integer if it is finite. A key area of research in the theory of quadratic forms is to find all the possible values this invariant can take for a fixed or varying field. For example, it has been established in the literature (see \cite[Chapter XI, Proposition 6.8]{Lam}) that the $u$-invariant cannot take values $3$, $5$, and $7$. See \cite[Chapter XIII, Section 6]{Lam}, for more open problems about this invariant. Considerable progress has also been made, particularly in the computation of the $u$-invariant of function fields of $p$-adic curves due to Parimala and Suresh in \cite{PS10}, \cite{PS14}, and by Harbater, Hartmann, and Krashen regarding the $u$-invariants in the case of function fields of curves over complete discretely valued fields in \cite{HHK09}.

Harbater, Hartmann, and Krashen defined the \textit{strong $u$-invariant of $K$}, denoted by $u_s(K)$, as the smallest real number $m$ such that, $u(E) \leq m$ for all finite field extensions $E/K$, and $u(E) \leq 2m$ for all finitely generated field extensions $E/K$ of transcendence degree $1$. We say that $u_s(K) = \infty$ if there exist such field extensions $E$ of arbitrarily large $u$-invariant. In \cite[Theorem 4.10]{HHK09}, the same authors prove that for $K$ a complete discretely valued field, whose residue field $\Tilde{K}$ has characteristic unequal to 2, $u_s(K) = 2 u_s(\Tilde{K})$.

Let $k$ be a complete non-Archimedean valued field with residue field $\Tilde{k}$ such that $\text{char}(\Tilde{k}) \neq 2$. 
Let $\sqrt{|k^*|}$ denote the divisible closure of the value group $|k^*|$. In \cite{Meh19}, Mehmeti shows that if $\text{dim}_\Q \sqrt{|k^*|} = n$, then $u_s(k) \leq 2^{n+1} u_s(\Tilde{k})$, and if $|k^*|$ is a free $\Z$-module with $\text{rank}_\Z |k^*| = n$, then $u_s(k) \leq 2^n u_s(\Tilde{k})$. Mehmeti used field patching in the setting of Berkovich analytic geometry, to prove a local-global principle, and provides applications to quadratic forms and the $u$-invariant. The results she obtained generalises those of \cite{HHK09}.
 
\bigskip

Our main result concerning $u$-invariant of a complete non-Archimedean (ultrametric) field is the following theorem.
\begin{theorem}[Theorem \ref{leq 2^n}]
    Let $k$ be a complete ultrametric field with $\text{char}(\Tilde{k}) \neq 2$. Suppose that $\text{dim}_\Q (\sqrt{\mid k^* \mid}) = n$ is finite. Then $u(k) \leq 2^n u(\Tilde{k})$.
\end{theorem}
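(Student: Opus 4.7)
The plan is to establish a Springer-type decomposition for anisotropic quadratic forms over $k$ that reduces the problem to quadratic forms over $\tilde k$, and then bound the number of pieces in the decomposition using the hypothesis $\dim_\Q \sqrt{|k^*|} = n$. Throughout, write $O_k$ for the valuation ring and $\mm$ for its maximal ideal.

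The first step is to record a short exact sequence
\[
 1 \to O_k^*/(O_k^*)^2 \to k^*/(k^*)^2 \to |k^*|/|k^*|^2 \to 1
\]
and to identify $O_k^*/(O_k^*)^2$ with $\tilde k^*/(\tilde k^*)^2$ via Hensel's lemma, which applies since $k$ is complete (hence henselian) and $\mathrm{char}(\tilde k)\neq 2$. Fix representatives $\{t_i\}_{i\in I} \subset k^*$ for the cosets of $|k^*|/|k^*|^2$; then any element of $k^*$ is, up to a square, of the form $t_i \cdot u$ with $u \in O_k^*$ for a unique $i$.

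Given an anisotropic quadratic form $q$ over $k$, I would diagonalize $q$ and regroup the entries by their class in $|k^*|/|k^*|^2$, obtaining an isometry
\[
 q \;\cong\; \perp_{i\in I}\, t_i \, q_i,
\]
where each $q_i$ is a diagonal form with entries in $O_k^*$. The key claim is that each reduction $\overline{q_i}$ remains anisotropic over $\tilde k$. Suppose some $\overline{q_{i_0}}$ had a primitive isotropic vector. Lifting coordinates to $O_k$ produces $v_0$ with $q_{i_0}(v_0)\in \mm$; using one unit coordinate and Hensel's lemma applied to a quadratic polynomial $X^2 - c$ with $c\in O_k^*$, one corrects $v_0$ to an honest isotropic vector of $q_{i_0}$ over $k$, which plainly embeds as an isotropic vector of $q$. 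Therefore $\dim q_i \leq u(\tilde k)$ for each $i$, and
\[
 \dim q \;\leq\; [\,|k^*| : |k^*|^2\,] \cdot u(\tilde k).
\]

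Finally, I would bound $[\,|k^*|:|k^*|^2\,] \leq 2^n$. Since $|k^*|$ is an ordered abelian group, it is torsion-free, and the hypothesis $\dim_\Q \sqrt{|k^*|} = n$ says its $\Q$-rank equals $n$. A standard argument shows that any $n+1$ elements of $|k^*|/|k^*|^2$ admit a nontrivial $\F_2$-linear relation: otherwise, clearing a gcd, they would lift to $\Z$-independent elements of $|k^*|$, hence to $\Q$-independent elements of $\sqrt{|k^*|}$, contradicting $\dim_\Q \sqrt{|k^*|} = n$. Hence $\dim_{\F_2}(|k^*|/|k^*|^2) \leq n$, giving the desired bound and completing the proof.

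The step I expect to be most delicate is the Springer decomposition itself, since $|k^*|$ may be dense in $\R_{>0}$ and there is no uniformizer. One must work directly with the representatives $\{t_i\}$ rather than powers of a single element, and verify that anisotropy descends to each block $\overline{q_i}$ using only Hensel's lemma in its ``approximate root'' form. Once this is in place, the rank-theoretic bound on $[\,|k^*|:|k^*|^2\,]$ is essentially formal.
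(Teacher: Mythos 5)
Your argument is correct and, at its core, follows the same strategy as the paper's proof: diagonalize $q$, sort the entries into blocks according to the square class of their value, write each block as a common scalar times a form with unit entries, and use Hensel's lemma (valid since $k$ is complete and $\mathrm{char}(\tilde{k})\neq 2$) to see that any block whose dimension exceeds $u(\tilde{k})$ is isotropic; the bound then follows because there are at most $2^n$ blocks. The one place you genuinely diverge is in how that bound of $2^n$ is obtained. The paper routes it through its Main Decomposition Lemma (Theorem 3.4), which constructs explicit elements $c_1,\dots,c_n\in k^*$ such that every $a\in k^*$ equals a unit times $\prod_j c_j^{\mu_j}$ times an $l$-th power; that lemma is proved by a fairly involved induction on $l$-adic valuations of exponents, and the paper needs it in that stronger, all-primes, multiplicative form because it is reused for the Brauer-dimension results (Theorems 3.6 and 3.7). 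You instead prove the abstract statement $\dim_{\F_2}\bigl(|k^*|/|k^*|^2\bigr)\le n$ directly from the hypothesis on the rational rank; your gcd argument is correct (since $|k^*|$ is torsion-free, a $\Z$-relation with coprime coefficients reduces to a nontrivial $\F_2$-relation), it is shorter, and it suffices here. It also generalizes verbatim to $\dim_{\F_l}\bigl(|k^*|/|k^*|^l\bigr)\le n$, so nothing essential would be lost even for the later applications. A minor merit of your write-up is that you make explicit the Hensel step (lifting an isotropic vector of the residue form with a unit coordinate and correcting that coordinate by solving $X^2=c$ for a unit $c$ reducing to a nonzero square), which the paper passes over in a single sentence.
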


\begin{cor}[Theorem \ref{leq 2^n+1}]
    Let $k$ be a complete ultrametric field with $\text{char}(\Tilde{k}) \neq 2$. Suppose the $\text{dim}_\Q (\sqrt{|k^*|}) = n$ is finite. Let $C$ be a curve over $k$ and $F = k(C)$ the function field of $C$. Then $u(F) \leq 2^{n+1} u_s(\Tilde{k})$. 
\end{cor}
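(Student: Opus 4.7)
The plan is to combine Theorem \ref{leq 2^n} with a patching / local--global principle for quadratic forms over function fields of curves, in the Berkovich-analytic formulation developed by Mehmeti \cite{Meh19}. The desired improvement over Mehmeti's bound $u(F)\leq 2^{n+2}u_s(\Tilde k)$ comes precisely from substituting the sharper Theorem \ref{leq 2^n} at the local step, where Mehmeti used her own (weaker) estimate $u(k)\leq 2^{n+1}u(\Tilde k)$.

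Let $q$ be a quadratic form over $F$ with $\dim q > 2^{n+1}u_s(\Tilde k)$; the goal is to show $q$ is isotropic. First I would attach to $C/k$ its Berkovich analytic curve $C^{\mathrm{an}}$ and choose a Mehmeti-style analytic cover, producing a finite collection of local overfields $\{F_i\}$ of $F$, each carrying a natural complete ultrametric structure. These split into two essential classes: ``closed-point'' local factors $F_i = \hat F_P$, which (via an appropriate Gauss-type extension) are complete ultrametric fields with $\dim_\Q \sqrt{|F_i^*|} = n+1$ and residue field a finite extension of $\Tilde k$; and ``component'' local factors $F_i = \hat F_\eta$, which are complete ultrametric fields with $\dim_\Q \sqrt{|F_i^*|} = n$ and residue field a finitely generated extension of $\Tilde k$ of transcendence degree $1$.

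Next I would apply the Berkovich patching theorem: $q/F$ is isotropic if and only if $q/F_i$ is isotropic for every $i$. For each local factor, Theorem \ref{leq 2^n} together with the definition of $u_s(\Tilde k)$ yields $u(\hat F_P) \leq 2^{n+1}\,u_s(\Tilde k)$ (one application of the main theorem with exponent $n+1$ and residue a finite extension of $\Tilde k$) and $u(\hat F_\eta) \leq 2^n \cdot 2\,u_s(\Tilde k) = 2^{n+1}\,u_s(\Tilde k)$ (one application with exponent $n$, combined with $u(\Tilde k(C')) \leq 2\,u_s(\Tilde k)$ for the transcendence-degree-one residue). Since $\dim q$ exceeds both local bounds, $q$ is isotropic over every $F_i$, and hence over $F$ by patching.

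The hard part will be the patching step: establishing the local--global principle over a complete ultrametric field $k$ without assuming discreteness or freeness of the value group, and verifying that each local factor is indeed a complete ultrametric field whose rational rank and residue field are as claimed. This is where Mehmeti's Berkovich-analytic machinery is essential; in particular, one must track carefully how passing from a point of type $2$ (resp.\ a rigid point) to its completed residue field alters $\dim_\Q \sqrt{|\cdot^*|}$ by $0$ (resp.\ $1$), and how the corresponding residue field acquires transcendence degree $1$ (resp.\ stays finite) over $\Tilde k$. Once the patching and this local structure are in place, the remaining combinatorics is a direct application of Theorem \ref{leq 2^n}.
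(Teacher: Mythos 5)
Your proposal follows essentially the same route as the paper: reduce to the local completions via Mehmeti's Berkovich local--global principle, then bound $u$ of each local field by combining Theorem \ref{leq 2^n} with Springer's theorem and the definition of $u_s(\Tilde{k})$. The only real difference is presentational: the paper invokes Mehmeti's result in its valuation-theoretic form (Theorem \ref{LGP Meh}, over all rank-$1$ valuations in $V(F)$, including those trivial on $k$) and organizes the local case analysis via Abhyankar's inequality $s+t\leq 1$, so the patching step you flag as ``the hard part'' is available as a direct citation rather than something to be re-established.
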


%\subsection{Results on central simple algebras}

Given a field $k$, recall the definitions of period and index of a central simple algebra. The \textit{period} (or \textit{exponent}) of a central simple $k$-algebra $A$ is the order of class of $A$ in the Brauer group of $k$. The \textit{index} of $A$ is the degree of the division algebra $D_A$ that lies in the class of $A$ (i.e., $A$ is a matrix ring over $D_A$). The period and index always have the same prime factors, and the period always divides the index \cite[Proposition 4.5.13]{GS}. 

The \textit{period-index problem} asks whether all central simple algebras $A$ over a given field $k$ satisfy ind($A$) | per($A$)$^d$ for some fixed exponent $d$ depending only on $k$. In the spirit of \cite{PS14}, we make the following definition. Let $k$ be any field. For a prime $l$, define the \textit{Brauer $l$-dimension} of $k$, denoted by Br$_l$dim($k$), to be the smallest integer $d \geq 0$ such that for every finite extension $L$ of $k$ and for every central simple algebra $A$ over $L$ of period a power of $l$, ind($A$) divides per($A$)$^d$. The \textit{Brauer dimension of $k$}, denoted by $\text{Brdim}(k)$, is defined as the maximum of the Brauer $l$-dimension of $k$ as $l$ varies over all primes. It is expected that this invariant should increase by one upon a finitely generated field extension of transcendence degree one. 

\bigskip

Saltman proved some results in this direction, including the fact that the index divides the period squared for function fields of $p$-adic curves \cite{Salt97}. He also developed a general mechanism to relate the Brauer dimension of curves over discretely valued fields to that of curves over the residue field. Then Harbater, Hartmann, and Krashen in \cite[Theorem 5.5]{HHK09} show that for $k$ a complete discretely valued field, its residue field $\Tilde{k}$, $F$ the function field of a curve over $k$, and $l \neq \text{char}(\Tilde{k})$, they prove that if $\text{Br}_l \text{dim} (\Tilde{k}) \leq d$ and $\text{Br}_l \text{dim}(\Tilde{k}(T)) \leq d+1$, then $\text{Br}_l \text{dim}(F) \leq d+2$. 

\bigskip

Using \cite{Meh19} we get the following as a direct consequence.
\begin{theorem}[Theorem \ref{local-global index}]
    Let $k$ be a complete ultrametric field with $\text{dim}_\mathbb{Q} (\sqrt{|k^*|}) = n$ is finite. Let $C$ be a curve over $k$ and $F = k(C)$ the function field of the curve. Let $A$ be a central simple algebra over $F$ and let $V(F)$ be the set of all non-trivial rank 1 valuations of $F$. Then $\text{ind}(A)$ is the maximum of the set $\{ \text{ind}(A \otimes \widetilde{F_v}) \}$ for $v \in V(F)$.
\end{theorem}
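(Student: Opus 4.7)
The plan is to derive the statement directly from Mehmeti's local-global principle \cite{Meh19}, via the standard correspondence between the index of a central simple algebra and rational points on its generalised Severi-Brauer varieties.

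First, I would dispose of the trivial inequality. Since index can only decrease under base change, $\mathrm{ind}(A \otimes \widetilde{F_v})$ divides $\mathrm{ind}(A)$ for every $v \in V(F)$; in particular $\max_{v \in V(F)} \mathrm{ind}(A \otimes \widetilde{F_v}) \le \mathrm{ind}(A)$.

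For the reverse inequality, set $m := \max_{v \in V(F)} \mathrm{ind}(A \otimes \widetilde{F_v})$ and consider the generalised Severi-Brauer variety $X = \SB(m,A)$. For any field extension $E/F$ the set $X(E)$ is non-empty precisely when $\mathrm{ind}(A \otimes_F E)$ divides $m$; by the choice of $m$ this gives $X(\widetilde{F_v}) \neq \emptyset$ for every $v \in V(F)$. The variety $X$ is projective homogeneous under $\PGL_1(A)$, so I would then invoke Mehmeti's local-global principle for such varieties over the function field $F = k(C)$ of a curve over a complete ultrametric field---the Berkovich-analytic analogue of the Harbater--Hartmann--Krashen patching result---to conclude that $X(F) \neq \emptyset$. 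This gives $\mathrm{ind}(A) \mid m$, and combined with the first inequality yields $\mathrm{ind}(A) = m$.

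The main obstacle will be identifying and citing the precise form of Mehmeti's local-global statement in \cite{Meh19}. Her results are formulated in terms of Berkovich analytic neighbourhoods, so the associated family of overfields that she patches must be matched with the family of completions $\{\widetilde{F_v}\}_{v \in V(F)}$ appearing in the theorem, and it must be verified that her hypotheses cover projective homogeneous (in particular, generalised Severi-Brauer) varieties. Once this bookkeeping is in place the conclusion is immediate; one could equivalently apply her local-global principle directly at the level of Brauer classes to the cyclic subgroup generated by the class of $A$, which offers a slightly more elementary route avoiding the explicit use of $\SB(m,A)$.
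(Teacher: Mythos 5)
Your proposal is correct and is essentially the paper's own argument: the paper likewise reduces the statement to the fact that $\mathrm{SB}_i(A)(E)\neq\emptyset$ iff $\mathrm{ind}(A_E)\mid i$, notes that the connected rational group $\GL_1(A)$ (you use $\PGL_1(A)$, an immaterial difference) acts transitively on the points of $\mathrm{SB}_i(A)$, and then applies Mehmeti's local--global principle over $F=k(C)$ for the family of completions $F_v$, $v\in V(F)$. (The $\widetilde{F_v}$ in the statement is a typo for $F_v$, as the body of the paper confirms.)
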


Our main result concerning the Brauer dimension are the following theorems. %which recovers the result of Harbater, Hartmann, and Krashen in \cite{HHK09}.

\begin{theorem}[Theorem \ref{leq d+n}]
    Let $k$ be a complete ultrametric field. Let $l$ be a prime such that $l \neq \text{char}(\Tilde{k})$. Suppose $\text{dim}_\mathbb{Q} (\sqrt{|k^*|}) = n$ is finite. If the $\text{Br}_l {dim} (\Tilde{k}) = d$, then $\text{Br}_l {dim}(k) \leq d + n$.
\end{theorem}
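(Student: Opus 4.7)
The plan is to reduce the statement to a single central simple algebra over $k$ itself and then invoke the tame structure theorem of Jacob--Wadsworth for division algebras over Henselian valued fields.

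\textbf{Step 1: Reduction and tame decomposition.} By the definition of $\text{Br}_l\text{dim}$, I fix a finite extension $L/k$ and a central simple $L$-algebra $A$ of period $l^s$, and the goal is $\text{ind}(A)\mid l^{s(d+n)}$. Since $L$ is a complete ultrametric field and $|L^*|/|k^*|$ is torsion of bounded exponent, $\sqrt{|L^*|}=\sqrt{|k^*|}$, giving $\dim_\Q\sqrt{|L^*|}=n$; moreover $\Tilde{L}/\Tilde{k}$ is finite, so $\text{Br}_l\text{dim}(\Tilde{L})\leq d$. Replacing $k$ by $L$, I may assume $L=k$. Let $D$ be the underlying division algebra of $A$. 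Since $k$ is complete (hence Henselian) and $l\neq\text{char}(\Tilde{k})$, the algebra $D$ is tame: the valuation on $k$ extends uniquely to $D$, yielding a value group $\Gamma_D\supseteq \Gamma_k:=|k^*|$ and a residue algebra $\Tilde{D}$ central simple over a finite abelian Galois extension $Z(\Tilde{D})/\Tilde{k}$. The Jacob--Wadsworth structure theorem gives $[D:k]=[\Tilde{D}:\Tilde{k}]\cdot e$, where $e:=[\Gamma_D:\Gamma_k]$ and $\text{per}(\Tilde{D}/Z(\Tilde{D}))\mid l^s$; writing $m:=\text{ind}(\Tilde{D}/Z(\Tilde{D}))$, one extracts $\text{ind}(D)\mid e\cdot m$.

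\textbf{Step 2: Bounding the two factors.} For the ramification index $e$: since $\Gamma_D/\Gamma_k$ is a finite abelian group of exponent dividing $l^s$, it embeds in $\tfrac{1}{l^s}\Gamma_k/\Gamma_k \cong \Gamma_k/l^s\Gamma_k$ (the isomorphism via multiplication by $l^s$, which is injective on the torsion-free $\sqrt{\Gamma_k}$). If $\bar{x}_1,\ldots,\bar{x}_r$ are $\Z/l^s$-independent in $\Gamma_k/l^s\Gamma_k$, then their lifts $x_i\in\Gamma_k$ are $\Q$-independent in $\Gamma_k\otimes\Q\cong\Q^n$: any putative $\Q$-relation among the $x_i$ clears to an integer relation with coefficient gcd equal to $1$; at least one coefficient is then not divisible by $l$, so its reduction mod $l^s$ is nonzero, contradicting $\Z/l^s$-independence. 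Hence the $\Z/l^s$-rank of $\Gamma_k/l^s\Gamma_k$ is at most $n$, so $|\Gamma_k/l^s\Gamma_k|\leq l^{sn}$, and since both quantities are $l$-powers, $e\mid l^{sn}$. For the residue factor $m$: $Z(\Tilde{D})$ is a finite extension of $\Tilde{k}$, so $\text{Br}_l\text{dim}(Z(\Tilde{D}))\leq d$; combined with $\text{per}(\Tilde{D}/Z(\Tilde{D}))\mid l^s$, this yields $m\mid l^{sd}$. Combining, $\text{ind}(A)\mid e\cdot m\mid l^{s(d+n)}$, as required.

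\textbf{Main obstacle.} The principal difficulty is confirming that Jacob--Wadsworth's tame theory of division algebras applies in the generality of value groups of arbitrary finite $\Q$-rank (not just discrete ones); in particular, the factorisation $\text{ind}(D)\mid em$ (equivalently, $[Z(\Tilde{D}):\Tilde{k}]\mid e$) rests on the existence of a non-degenerate pairing between $\Gamma_D/\Gamma_k$ and $\mathrm{Gal}(Z(\Tilde{D})/\Tilde{k})$, and this machinery must be invoked in the required generality. If the Henselian framework of Jacob--Wadsworth or Tignol--Wadsworth is cited directly, the step is immediate, but one could alternatively proceed by a residue-map analysis of $\text{Br}_l(k)$ through the tame $l$-torsion of $H^2(k,\muu_{l^s})$, bounding the image of the residue by the $\Z/l^s$-rank of $\Gamma_k/l^s\Gamma_k$; the ramification bound in Step~2 is the essential combinatorial input either way.
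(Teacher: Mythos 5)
Your argument is correct in outline but takes a genuinely different route from the paper. The paper works on the Brauer-group side: assuming $\mu_l \subset k$, it invokes Merkurjev--Suslin to write a class in ${}_l\text{Br}(k)$ as a product of symbol algebras, applies the Main Decomposition Lemma (Theorem \ref{main decomposition}) to the entries of each symbol so as to split off an Azumaya algebra $A_0$ over $k^\circ$ together with $n$ symbols $(x_j,c_j)$, observes that the latter are killed by the degree-$\leq l^n$ Kummer extension $k(\sqrt[l]{c_1},\dots,\sqrt[l]{c_n})$, and bounds $\text{ind}(A_0)$ by $\text{ind}(A_0\otimes\Tilde{k})$ using completeness of $k^\circ$ (Proposition \ref{index of azumaya algebras}); the hypothesis $\mu_l\subset k$ is removed afterwards by passing to $k(\rho_l)$. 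You instead work on the division-algebra side via the Henselian valuation theory of Jacob--Wadsworth: tameness is automatic since $l\neq\text{char}(\Tilde{k})$, the fundamental equality and the surjection $\Gamma_D/\Gamma_k\to\text{Gal}(Z(\Tilde{D})/\Tilde{k})$ give $\text{ind}(D)\mid e\cdot\text{ind}(\Tilde{D})$, and the rational rank controls $e$. The worry in your final paragraph is unfounded: that theory is developed for arbitrary Henselian valued fields, hence applies to a complete rank-one valuation with value group of finite rational rank. What your route buys: it avoids Merkurjev--Suslin and the roots-of-unity reduction entirely, and it treats period $l^s$ with $s>1$ and the passage to finite extensions $L/k$ explicitly, both of which the paper's proof leaves implicit (the paper argues only for classes of period dividing $l$). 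What the paper's route buys: an explicit presentation of the Brauer class (Theorem \ref{A split}), which it reuses elsewhere, and an argument built on the same decomposition lemma already needed for the $u$-invariant results.

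Two points in your write-up should be tightened. First, the assertion that $\Gamma_D/\Gamma_k$ has exponent dividing $\text{per}(D)=l^s$ is true for tame $D$ but is itself a theorem of the Jacob--Wadsworth/Tignol--Wadsworth theory and must be cited; the bound one gets for free from reduced norms is only $\exp(\Gamma_D/\Gamma_k)\mid\deg(D)$, which would make the estimate circular. Second, bounding the number of $\Z/l^s$-independent elements of $\Gamma_k/l^s\Gamma_k$ by $n$ does not by itself bound the order of that group, since it could a priori have many cyclic summands of smaller order. Run your lifting argument with $s=1$ to get $\dim_{\F_l}(\Gamma_k/l\Gamma_k)\leq n$; then $\Gamma_k/l^s\Gamma_k$ is generated by at most $n$ elements, each of order dividing $l^s$, which gives $|\Gamma_k/l^s\Gamma_k|\leq l^{sn}$ as needed.
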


\begin{theorem}[Theorem \ref{leq d+n+1}]
    Let $k$ be a complete ultrametric field. Let $l$ be a prime such that $l \neq \text{char}(\Tilde{k})$. Suppose $\text{dim}_\mathbb{Q} (\sqrt{|k^*|}) = n$ is finite. Let $C$ be a curve over $k$ and $F = k(C)$ the function field of $C$. Suppose there exist an integer $d$ such that $\text{Br}_l \text{dim} (\Tilde{k}) \leq d$ and $\text{Br}_l \text{dim} (\Tilde{k}(T)) \leq d+1$. Then the $\text{Br}_l \text{dim} (F) \leq d+1+n$.
\end{theorem}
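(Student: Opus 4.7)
My plan is to combine the local-global principle of Theorem \ref{local-global index} with Theorem \ref{leq d+n}, applied separately to each completion of the function field. Given a finite extension $L/F$ and a central simple $L$-algebra $A$ of $l$-power period, I first let $k'$ denote the algebraic closure of $k$ in $L$: since $k'/k$ is finite algebraic, $k'$ is again a complete ultrametric field with $\dim_{\Q}\sqrt{|k'^*|}=n$ and residue field $\widetilde{k'}$ finite over $\widetilde{k}$, so by the definition of Brauer $l$-dimension the hypotheses $\text{Br}_l\text{dim}(\widetilde{k'})\leq d$ and $\text{Br}_l\text{dim}(\widetilde{k'}(T))\leq d+1$ persist. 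Moreover $L=k'(C')$ for a smooth projective curve $C'/k'$, so Theorem \ref{local-global index} gives $\text{ind}(A)=\max_{v\in V(L)}\text{ind}(A\otimes_L\widetilde{L_v})$; it therefore suffices to prove $\text{Br}_l\text{dim}(\widetilde{L_v})\leq d+n+1$ for every non-trivial rank-one valuation $v$ of $L$.

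I then split into cases according to whether $v|_{k'}$ is trivial. If $v|_{k'}$ is trivial, $v$ is the discrete valuation at a closed point $x\in C'$ with residue field $\kappa(v)$ finite over $k'$, hence itself a complete ultrametric field with $\dim_{\Q}\sqrt{|\kappa(v)^*|}=n$ and residue field of Brauer $l$-dimension $\leq d$; Theorem \ref{leq d+n} applied to $\kappa(v)$ gives $\text{Br}_l\text{dim}(\kappa(v))\leq d+n$, and a second application, to the complete discretely valued field $\widetilde{L_v}$ with residue $\kappa(v)$, gives $\text{Br}_l\text{dim}(\widetilde{L_v})\leq(d+n)+1$. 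If instead $v|_{k'}$ is non-trivial, then it is equivalent to the unique rank-one valuation of the complete field $k'$, and Abhyankar's inequality for the extension $v/v|_{k'}$ of the transcendence-degree-one extension $L/k'$ yields
\[
\dim_{\Q}\bigl(\sqrt{|\widetilde{L_v}^*|}/\sqrt{|k'^*|}\bigr)+\text{tr.deg}\bigl(\kappa(v)/\widetilde{k'}\bigr)\leq 1.
\]
Either $\kappa(v)/\widetilde{k'}$ is algebraic with $\dim_{\Q}\sqrt{|\widetilde{L_v}^*|}\leq n+1$, giving $\text{Br}_l\text{dim}(\kappa(v))\leq d$ and hence $\text{Br}_l\text{dim}(\widetilde{L_v})\leq d+(n+1)$ by Theorem \ref{leq d+n}; or $\text{tr.deg}(\kappa(v)/\widetilde{k'})=1$ with $\dim_{\Q}\sqrt{|\widetilde{L_v}^*|}\leq n$, in which case $\kappa(v)$ is algebraic over $\widetilde{k'}(T)$ so $\text{Br}_l\text{dim}(\kappa(v))\leq d+1$ and $\text{Br}_l\text{dim}(\widetilde{L_v})\leq(d+1)+n$. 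In every case the uniform bound is $d+n+1$, and feeding this back into the local-global identity yields $\text{ind}(A)\mid\text{per}(A)^{d+n+1}$.

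I expect the main technical obstacle to be the non-trivial-restriction case: one must verify that Abhyankar's inequality correctly accounts for the completed field $\widetilde{L_v}$ (rather than only for $L$ itself) and that the Brauer $l$-dimension bounds on the residue fields follow from the hypotheses on $\widetilde{k}$ and $\widetilde{k}(T)$ passed through the algebraic extensions $\widetilde{k'}/\widetilde{k}$ and $\widetilde{k'}(T)/\widetilde{k}(T)$. It is precisely this case distinction that explains why the hypothesis $\text{Br}_l\text{dim}(\widetilde{k}(T))\leq d+1$ is indispensable — without it the second sub-case would produce a weaker bound and the two branches would not match. A secondary, more routine check is that Theorem \ref{local-global index} carries over from $F$ itself to every finite extension $L/F$ via the presentation $L=k'(C')$.
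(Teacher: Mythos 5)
Your argument is correct and follows essentially the same route as the paper: reduce via the local-global principle to bounding $\text{Br}_l\text{dim}$ of the completions, split on whether $v$ is trivial on the constant field, and apply Abhyankar's inequality together with Theorem \ref{leq d+n} in the three resulting sub-cases $(s,t)\in\{(0,0),(1,0),(0,1)\}$. You are in fact slightly more careful than the paper in passing explicitly to finite extensions $L=k'(C')$ of $F$ (which the definition of Brauer $l$-dimension requires) and in replacing the citation to Harbater--Hartmann--Krashen by a second application of Theorem \ref{leq d+n} with $n=1$ in the residually-trivial case.
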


\textit{Remark:} Note that if $k$ is a complete discretely valued field, then $n = 1$. Theorem 1.4 now gives that $\text{Br}_l \text{dim} (k) \leq d+1$, which is a classical result \cite[Corollary 7.1.10]{GS}. In the same case, Theorem 1.5 now implies that if $l \neq \text{char}(\Tilde{k})$, then $\text{Br}_l \text{dim}(F) \leq d+2$, which is found in \cite[Corollary 5.10]{HHK09}.

\subsection{Organisation of the manuscript}
We start with some preliminaries and necessary background in Section 2, where we talk about the known results on local-global principles from \cite{HHK09} and \cite{Meh19}.

In Section 3 we prove the main decomposition lemma, which we then use to prove that for a complete ultrametric field $k$ with residue field characteristic $\text{char}(\Tilde{k}) \neq 2$ and $\text{dim}_\Q (\sqrt{|k^*|}) = n$, $u(k) \leq 2^n u(\Tilde{k})$. We further prove the result about $\text{Br}_l\text{dim}(k) \leq \text{Br}_l\text{dim}(\Tilde{k})+n$.

In Section 4, using the results from Section 3 and the results of Mehmeti we prove our results on $u$-invariant and Brauer $l$-dimension of function fields of curves over complete ultrametric fields.

 \section{Preliminaries}

Let $F$ be a field. For a central simple $F$-algebra $A$, the \textit{period} of $A$ is the order of its class in the Brauer group of $F$, and the \textit{index} of $A$ is the degree of the division algebra Brauer equivalent to $A$. The index of $A$ is denoted by ind($A$) and period of $A$ by per($A$). Also, per($A$) divides ind($A$), and the two numbers are composed of the same prime factors.

If a linear algebraic group $G$ acts on a variety $H$ over a field $F$, we will say that $G$ \textit{acts transitively on the points of} $H$ if for every field extension $E$ of $F$ the induced action of the group $G(E)$ on the set $H(E)$ is transitive. 
\bigskip

The following definition is due to F. Ch$\hat{\text{a}}$telet. For a more detailed discussion of results on Severi-Brauer varieties see Section 7.1 in \cite{CTS}.

\begin{defin} [Severi-Brauer variety]
    Let $n$ be a positive integer. A Severi-Brauer variety of dimension $n-1$ over a field $k$ is a twisted form of the projective space $\mathbb{P}^{n-1}_k$. Equivalently, this is a $k$-variety $X$ such that there exists a field extension $k \subset K$ and an isomorphism of $K$-varieties $X \times_k K \cong \mathbb{P}^{n-1}_k$.
\end{defin}
 There is a natural bijection between the isomorphism classes of Severi-Brauer varieties over a field $k$ and the isomorphism classes of central simple $k$-algebras.
 
 For instance, a twisted form of $\mathbb{P}^1_k$ is a smooth, projective, geometrically integral curve $C$ of genus 0, and vice versa, any smooth plane conic is a twisted form of $\mathbb{P}^1_k$. The automorphism group of projective space $\mathbb{P}^{n-1}_k$ is the algebraic group PGL$_{n,k}$ \cite[Ch. II, Example 7.1.1]{Hartshorne}. The automorphism group of the matrix algebra $M_n(k)$ is the projective general linear group PGL$_n (k)$ \cite[Corollary 2.4.2]{GS}. Galois descent then gives a bijection between the isomorphism classes of twisted forms of $\mathbb{P}^{n-1}_k$ and the isomorphism classes of twisted forms of $M_n(k)$ (see the discussion in \cite[Section 1.3.2]{CTS}), plus we know that the matrix algebra $M_n(k)$ is always a central simple algebra over $k$. Thus we get a canonical bijection of pointed sets \cite[Theorem 2.4.3]{GS}
 \[ H^1 (k, \text{PGL}_{n,k} ) \cong \{ \text{central simple algebras over $k$ of degree $n$}\}/\text{iso}\\ \]\[ \cong \{ \text{Severi-Brauer varieties over $k$ of dimension $n-1$} \}/ \text{iso} \] and a map of pointed sets $H^1 (k, \text{PGL}_{n,k} ) \to \text{Br}(k)$, where a central simple algebra $A$ of degree $n$ is sent to its class $[A] \in \text{Br}(k)$. In the case of a Severi-Brauer variety $X$ of dimension $n-1$, denote by $[X] \in \text{Br}(k)$ the image of the isomorphism class of $X$ under this map.

 For a central simple algebra $A$ over $k$ of degree $n$, define $X(A)$ to be the $k$-scheme of right ideals of $A$ of rank $n$. More precisely, for any commutative $k$-algebra $R$, the set $X(A)(R)$ is the set of right ideals of the $k$-algebra $A \otimes_k R$, which are projective $R$-modules of rank $n$ and are direct summands of the $R$-module $A \otimes_k R$. This is a closed subscheme of the Grassmannian of $n$-dimensional subspaces of the $k$-vector space $A$. For more details, see the discussion in \cite[Chapter 1, \S1.C]{book of involutions}.

%\begin{theorem}
%    Let $X$ be a variety over a field $k$. The following properties are equivalent:
%    \begin{itemize}
%        \item[(i)] $X$ is a Severi-Brauer variety of dimension $n-1$.
%        \item[(ii)] There is an isomorphism $\Bar{X} \cong \mathbb{P}^{n-1}_{\Bar{k}}$.
%        \item[(iii)] There is an isomorphism $X^s \cong \mathbb{P}^{n-1}_{k^s}$.
%        \item[(iv)] There is a central simple $k$-algebra $A$ of degree $n$ such that $X \cong X(A)$.
%    \end{itemize} The central simple algebra $A$ in (iv) is well-defined up to isomorphism. If $X = X(A)$, then $[X] = [A] \in \text{Br}(k)$.
%\end{theorem}

Let $A$ be a central simple $F$- algebra of degree $n$ and let $I \subseteq A$ be a right ideal of $A$. Then the $F$-dimension of $I$, denoted by $\text{dim}_F (I)$ is divisible by the degree of $A$, deg($A$). The quotient $\text{dim}_F (I)/\text{deg}(A)$ is called the \textit{reduced dimension} of the ideal (see Definition 1.9 in \cite{book of involutions}). For any integer $i$, we write SB$_i (A)$ for the $i$-\textit{th generalized Severi-Brauer variety} of the right ideals in $A$ of reduced dimension $i$. In particular, SB$_0 (A)$ $=$ Spec($F$) $=$ SB$_{\text{deg}(A)} (A)$ and SB$_i (A) = \phi$ for $i$ outside of the interval $[0, \text{deg}(A)]$. The variety SB$_1 (A)$ is the usual Severi-Brauer variety of A.

In the same setup let $E/F$ be a field extension and if $A_E$ denotes the algebra $A \otimes_F E$, then the $E$-points of $\text{SB}_i(A)$ are in bijection with the right ideals in $A_E$ which are of reduced dimension $i$ and are direct summands of $A_E$.
Also, $\text{SB}_i(A)(E) \neq \phi$ if and only if ind($A_E$) divides $i$ (see Chapter 1, \S1.C in \cite{book of involutions}). Since $A_E$ is a central simple algebra over $E$, we have $A_E \cong \text{M}_m (D)$ for some $E$-division algebra $D$ and some $m \geq 1$. %\cite[Theorem 2.1.3]{GS} 
Now, the right ideals of reduced dimension $i$ in $A_E$ are in natural bijection with the subspaces of $D^m$ of $D$-dimension $i/\text{ind}(A_E)$ \cite[Proposition 1.12]{book of involutions}. Thus, writing $D_A$ for the $F$-division algebra in the class of $A$, the $F$-linear algebraic group GL$_1$($A$) $=$ GL$_m$($D_A$) acts transitively on the points of the $F$-scheme $\text{SB}_i (A)$.

\begin{defin} \cite{HHK09}
Let $K$ be a field. Let $X$ be a $K$-variety and $G$ a linear algebraic group over $K$. We say that $G$ acts on $X$ and, for any field extension $L/K$, either $X(L) = \phi$ or $G(L)$ acts transitively on $X(L)$.  
\end{defin}

A field $k$ equipped with a non-Archimedean valuation $|\cdot| : k \to \mathbb{R}_{\geq 0}$ is called a \textit{non-Archimedean field} or an \textit{ultrametric field}. We say $k$ is \textit{complete} if every Cauchy sequence converges to a limit.

In this paper, we use field patching in the setting of Berkovich analytic geometry. By patching over analytic curves, Mehmeti proved a local-global principle and provided applications to quadratic forms and the u-invariant.

We use the following main result of Mehmeti \cite{Meh19} on the local-global principle in Berkovich theory.

\begin{theorem}[Mehmeti \cite{Meh19}] \label{LGP Meh}
    Let $k$ be a complete ultrametric field. Let $C$ be a normal irreducible projective $k$-algebraic curve. Denote by $F$ the function field of $C$. Let $X$ be an $F$-variety and $G$ a connected rational linear algebraic group over $F$ acting strongly transitively on $X$.

    Let $V(F)$ be the set of all non-trivial rank 1 valuations of $F$ which either extend the valuation of $k$ or are trivial when restricted to $k$.

    %Denote by $C^{\text{an}}$ the Berkovich analytification of $C$, so that $F = \mathscr{M}(C^{\text{an}})$, where $\mathscr{M}$ denotes the sheaf of meromorphic functions of $C^{\text{an}}$. Then the following local-global principles hold:
    %\begin{itemize}
    %    \item (cf. \cite[Theorem 3.11]{Meh19}) $X(F) \neq \phi$ if and only if $X (\mathscr{M}_x) \neq \phi$ for all $x \in C^{\text{an}}$.
    %    \item (cf. \cite[Corollary 3.18]{Meh19}) If $F$ is a perfect field or $X$ is a smooth variety, then \[X(F) \neq \phi\ \text{if and only if}\ X(F_v) \neq \phi\ \text{for all v} \in V(F),\]
    %\end{itemize} 
   If $F$ is a perfect field or $X$ is a smooth variety, then \[X(F) \neq \phi\ \text{if and only if}\ X(F_v) \neq \phi\ \text{for all v} \in V(F),\] where $F_v$ denotes the completion of $F$ with respect to $v$.
\end{theorem}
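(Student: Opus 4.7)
The plan is to establish this local-global principle by implementing the Berkovich-analytic analogue of the Harbater--Hartmann--Krashen field patching technique. The implication $X(F)\neq\phi \Rightarrow X(F_v)\neq\phi$ is immediate by base change, so the entire content lies in the converse. I would attack it by working on the Berkovich analytification $C^{\mathrm{an}}$ of $C$ and packaging the local data at valuations into a glueing problem for an appropriate affinoid cover.

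For the first step, I would choose a finite admissible cover of $C^{\mathrm{an}}$ by affinoid domains $U_i$ with affinoid intersections $U_{ij}$, and attach to each piece a field of meromorphic functions $F_{U_i}$, and similarly $F_{U_{ij}}$ on the overlaps. Using the rationality and connectedness of $G$ together with an analytic version of Cartan's factorisation lemma, I would prove: every $g\in G(F_{U_{ij}})$ can be written as $g = g_i\cdot g_j$ with $g_i\in G(F_{U_i})$ and $g_j\in G(F_{U_j})$. Combined with the strongly transitive action on $X$, this yields the patching assertion that any compatible system of local $F_{U_i}$-points of $X$ can be modified by $G$ to agree on overlaps and hence descend to an $F$-point of $X$.

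For the second step, I would reduce the hypothesis $X(F_v)\neq \phi$ for every $v\in V(F)$ to the existence of an $F_{U_i}$-point of $X$ for every $i$. Each $F_{U_i}$ sits inside the completion of its analytic ring at a Berkovich point $x\in U_i$, whose associated valuation corresponds to some $v\in V(F)$ --- either an extension of $|\cdot|_k$, coming from a Type 2 or 3 point, or a trivial-on-$k$ valuation, coming from a classical point lying over a closed point of $C$. The smoothness of $X$ (or perfectness of $F$) then provides a Henselian/implicit-function mechanism lifting a given point of $X(F_v)$ to $X(F_{U_i})$ once $U_i$ is a sufficiently small analytic neighbourhood of $x$.

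The hardest step, I expect, is the first: proving the analytic factorisation and patching theorem in sufficient generality. In the discretely-valued HHK setup one has a clean algebraic description of the patching fields and a classical Beauville--Laszlo glueing for $G$-torsors; over a Berkovich curve one must instead verify a Cartan-type factorisation for $G$-valued functions on affinoid overlaps, and glue $G$-torsors across these overlaps in the analytic topology, which requires careful control of the affinoid algebras and their rings of meromorphic functions. A secondary subtlety is bookkeeping: $V(F)$ is large and of mixed type, so one must verify that every local obstruction at a $v\in V(F)$ is already detected by some patch $U_i$, which forces the cover to be chosen compatibly with the reduction of $C^{\mathrm{an}}$.
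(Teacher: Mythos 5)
This statement is quoted in the paper as a citation: the paper gives no proof of it at all, but imports it directly from Mehmeti's work \cite{Meh19} (it is, up to phrasing, her main local--global principle, obtained by transporting the Harbater--Hartmann--Krashen patching method to Berkovich curves). So there is no in-paper argument to compare yours against; the relevant comparison is with the proof in \cite{Meh19}. Your outline does track the broad architecture of that proof --- analytify $C$, patch over a finite cover, use connectedness and rationality of $G$ to get a factorisation statement, use strong transitivity to convert factorisation into descent of local points, and finally relate the valuations in $V(F)$ to points of the analytification --- so the strategy is the right one.

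As a proof, however, what you have written is a plan rather than an argument, and the two places where you say ``I would prove\dots'' are precisely where all the content lives. First, the factorisation theorem ($g = g_i g_j$ for $g \in G(F_{U_{ij}})$) is the technical heart of \cite{Meh19}; it is not a routine Cartan-type lemma but requires reducing to the relative projective line via a finite morphism $C \to \mathbb{P}^1$, constructing ``nice covers'' whose overlaps are finite sets of type-3 points (not general affinoid intersections, as your Čech-style setup suggests), and running a matrix-approximation argument in the associated patching fields; none of this is supplied. Second, the passage from ``$X(F_v) \neq \emptyset$ for all $v \in V(F)$'' to ``$X$ has a point over every patch'' is not just bookkeeping: Mehmeti's patching theorem is naturally stated in terms of the completed residue fields $\sheaf{H}(x)$ of points $x$ of the analytic curve, and one must show that solvability over all $F_v$ with $v \in V(F)$ forces solvability over every $\sheaf{H}(x)$, including type-1 and type-4 points where $\sheaf{H}(x)$ is not literally an $F_v$. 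This is exactly where the hypothesis ``$F$ perfect or $X$ smooth'' is consumed (via a density/Henselian implicit-function argument), and your sketch gestures at it without carrying it out. In short: right roadmap, but the theorem is being used as a black box in the paper, and your proposal does not close either of the two essential steps needed to actually establish it.
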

\bigskip

The following theorem follows from the proof of Harbater, Hartmann, and Krashen's Theorem 5.1 in \cite{HHK09} and Mehmeti's Corollary 3.19 in \cite{Meh19}. %because local index is bounded by global index.

\begin{theorem} \label{local-global index}
    Let $k$ be a complete, non-trivially valued ultrametric field and suppose $\text{dim}_\mathbb{Q} (\sqrt{|k^*|}) = n$ is finite. Let $C$ be a normal irreducible projective $k$-algebraic curve. Denote by $F$ the function field of $C$. 
    Let $A$ be a central simple algebra over $F$ and let $V(F)$ be the set of all non-trivial rank 1 valuations of $F$. Then $\text{ind}(A)$ is the maximum of the set $\{ \text{ind}(A \otimes F_v) \}$ for $v \in V(F)$.
\end{theorem}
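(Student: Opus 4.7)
The plan is to imitate the Severi--Brauer argument of Harbater--Hartmann--Krashen in \cite[Theorem 5.1]{HHK09}, replacing their field-patching local--global principle by Mehmeti's Berkovich-theoretic version, Theorem \ref{LGP Meh} (equivalently, Corollary 3.19 of \cite{Meh19}). Set
\[
m \;=\; \max_{v \in V(F)} \text{ind}(A \otimes_F F_v).
\]
The inequality $m \leq \text{ind}(A)$ is automatic, since the index can only drop under scalar extension. The substantive direction is $\text{ind}(A) \leq m$, which I propose to obtain by exhibiting a rational point on a suitable generalized Severi--Brauer variety.

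First I would introduce $X := \text{SB}_m(A)$, the $m$-th generalized Severi--Brauer variety of $A$ reviewed in Section~2. It is a smooth projective $F$-variety, and for any field extension $L/F$ one has $X(L) \neq \emptyset$ if and only if $\text{ind}(A \otimes_F L)$ divides $m$. By the very choice of $m$, this forces $X(F_v) \neq \emptyset$ for every $v \in V(F)$.

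Next I would verify the group-theoretic hypothesis of Theorem \ref{LGP Meh}. Let $G := \text{GL}_1(A)$, which is a connected rational linear algebraic group over $F$, being an inner form of $\text{GL}_{\deg A}$. As recalled in Section~2 (via \cite[Proposition 1.12]{book of involutions}), over any field extension $L/F$ for which $X(L) \neq \emptyset$, the group $G(L)$ acts transitively on $X(L)$; hence $G$ acts strongly transitively on $X$ in the sense of Mehmeti. Theorem \ref{LGP Meh} then applies directly and produces $X(F) \neq \emptyset$, which says that $A$ contains a right ideal of reduced dimension $m$, i.e.\ $\text{ind}(A) \mid m$, giving $\text{ind}(A) \leq m$ and completing the equality.

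The only nontrivial point is bookkeeping: confirming that $\text{SB}_m(A)$ and $\text{GL}_1(A)$ meet the smoothness, connectedness, rationality, and strong-transitivity hypotheses of Theorem \ref{LGP Meh}. All four are classical facts about (generalized) Severi--Brauer varieties and their automorphism groups; once they are in hand, the theorem is an immediate transcription of the HHK proof, with Mehmeti's local--global principle in the role played by patching in \cite{HHK09}. Note that the hypothesis $\dim_{\mathbb Q} \sqrt{|k^\ast|} = n < \infty$ enters only insofar as it is the standing assumption of the paper guaranteeing the setup in which Theorem \ref{LGP Meh} is invoked; the argument itself does not use $n$ beyond this.
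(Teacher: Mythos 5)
Your argument is essentially identical to the paper's proof: both exhibit $\mathrm{GL}_1(A)$ as a connected rational linear algebraic group acting transitively on the points of the generalized Severi--Brauer varieties $\mathrm{SB}_i(A)$, characterize $\mathrm{SB}_i(A)(L)\neq\emptyset$ by the condition $\mathrm{ind}(A\otimes_F L)\mid i$, and then invoke Mehmeti's local--global principle (Theorem \ref{LGP Meh}). The one point to watch --- present equally in the paper's proof and in the original HHK argument --- is that applying this with $i=m=\max_{v}\mathrm{ind}(A\otimes F_v)$ requires each local index to \emph{divide} $m$ rather than merely satisfy $\mathrm{ind}(A\otimes F_v)\le m$; this is standardly justified by first reducing, via primary decomposition, to algebras of prime-power degree, where all local indices are powers of a single prime and hence divide their maximum.
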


\begin{proof}
    Let $n$ be the degree of $A$. Then GL$_1 (A)$ is a Zariski open subset of $\mathbb{A}_F^{n^2}$. So, it is a rational connected linear algebraic group.

    If $1 \leq i < n$, then GL$_1 (A)$ acts transitively on the points of SB$_i (A)$; and if $E$ is a field extension of $F$, then SB$_i (A) (E) \neq \phi$ if and only if ind($A_E$) divides $i$ \cite[Proposition 1.17]{book of involutions}. So, the local-global principle proved by Mehmeti in \cite{Meh19} implies that ind($A$) | $i$ if and only if ind($A \otimes F_v) | i$ for each $v \in V(F)$. Thus ind($A$) $=$ max$_{v \in V(F)}$ \{ind($A \otimes F_v$)\} as claimed.
\end{proof}

%Recall that for any finitely generated field extension $F/k$ of transcendence degree 1, there exists a unique normal projective $k$-algebraic curve with function field $F$. Thus, the above result of the theorem above is applicable to any such field $F$.
We use the following notation throughout the paper.

\textit{Notation}: For any valued field $E$, let $E^\circ$ denote the ring of integers, $E^{\circ \circ}$ the corresponding maximal ideal, and $\widetilde{E}$ the residue field. Let $l$ be a prime such that $gcd ( l, char(\Tilde{k}) ) = 1$.
\bigskip
   
We will need the following theorem (stated here without proof) to develop the rest of the paper. As in Milnor's Algebraic $K$-Theory \cite{Mil71}, let $L$ be an arbitrary field, $n$ a positive integer with a non-zero image in $L$. The $K$-theory of $L$ forms a graded anti-commutative ring $\displaystyle\oplus_{i \geq 0} K_i(L)$ with $K_0(L) \cong \Z$, $K_1 (L) \cong \text{GL}_1 (L)$. Denote by $(a)$ for an element of $K_1(L)$ that corresponds with $a \in L^*$, such that $(a) + (b) = (ab)$. Multiplying $(a), (b) \in K_1(L)$ yields an element of $K_2(L)$ that is written as $(a,b)$. One calls $(a,b)$ a Steinberg symbol, or simply, a \textit{symbol}.

In literature, there are two descriptions of the main theorem of Merkurjev and Suslin, one with Galois cohomology and one with Brauer groups. Let $\mu_n$ be the $n$-th roots of unity in an algebraic closure $\Bar{L}$ of $L$ and denote by $L^s$ the separable closure of $L$ in $\Bar{L}$. Now consider $\mu_n$ as a module for the absolute Galois group Gal$(L^s/L)$. By Hilbert Theorem 90, the Galois cohomology group $H^1 (L, \mu_n)$ is isomorphic with $k_2(L) := K_1(L)/n K_1(L)$ (see \cite[Section 8.4]{GS}). Using a theorem of Matsumoto, %\cite{Mil71}
and the product structures in $K$-theory and in Galois cohomology, it gives the following homomorphism called the \textit{norm-residue homomorphism} or the \textit{Galois symbol}
\[
\alpha_{L,n}: k_2 (L) \to H^2(L, \mu_n^{\otimes 2}).
\] The Merkurjev-Suslin theorem now simply reads

\begin{theorem}[Merkurjev-Suslin \cite{MS82}]
    For all $L, n$ as above, $\alpha_{L,n}$ is an isomorphism.
\end{theorem}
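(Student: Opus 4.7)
The plan is to reduce first to the case $n = p$ a prime. Using the Bockstein sequences coming from $1 \to \mu_p \to \mu_{p^{k+1}} \to \mu_{p^k} \to 1$ and the corresponding long exact sequences in Galois cohomology, together with the parallel filtration on $K_2(L)/n$, one checks that both source and target behave compatibly and induction on $k$ reduces everything to the prime case. A standard restriction-corestriction argument on finite extensions of degree prime to $p$ then further reduces to the situation where $L$ contains a primitive $p$-th root of unity $\zeta$.

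Once $\zeta \in L$, Kummer theory furnishes an isomorphism $L^*/L^{*p} \cong H^1(L, \mu_p)$, and unwinding the definition of $\alpha_{L,p}$ shows that it sends a symbol $(a,b) \in k_2(L)/p$ to the class of the cyclic $L$-algebra $(a,b)_\zeta$ in ${}_p\Br(L) \cong H^2(L, \mu_p^{\otimes 2})$. Under these identifications, surjectivity of $\alpha_{L,p}$ becomes the statement that every class in ${}_p\Br(L)$ is a sum of classes of cyclic $L$-algebras of degree $p$, while injectivity becomes the statement that a combination of such symbols mapping to the trivial Brauer class must already vanish in $k_2(L)/p$.

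The geometric core of the argument is the analysis of the K-theory of Severi-Brauer varieties. For a central simple $L$-algebra $A$ of degree $p$ with $X = \SB(A)$, Quillen's theorem computes $K_*(X)$ in terms of the K-theory of the underlying division algebra, and one exploits the localization sequence
\[
K_2(X) \to K_2(L(X)) \xrightarrow{\partial} \bigoplus_{x \in X^{(1)}} K_1(\kappa(x))
\]
in parallel with the corresponding sequence in Galois cohomology. A careful diagram chase, using that $X_{L(X)}$ splits $A$ and that $X$ is geometrically rational, pins down the kernel and image of the base change $k_2(L)/p \to k_2(L(X))/p$ in terms of the class $[A]$. Comparing this with the analogous description of $H^2(L, \mu_p^{\otimes 2}) \to H^2(L(X), \mu_p^{\otimes 2})$, one extracts both injectivity (any element in the kernel is already a symbol multiple of $[A]$, hence killed by $\alpha$) and surjectivity (every cohomology class is detected by a symbol, modulo control provided by $X$).

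The hard part is essentially the whole K-theoretic computation on $\SB(A)$ and its compatibility with the norm-residue map — in particular, the step that actually produces symbols in $k_2(L)$ representing a given element of $H^2(L, \mu_p^{\otimes 2})$. The preliminary reductions to prime $n$ and to the case $\zeta \in L$ are formal, but the geometry of Severi-Brauer varieties, and especially Suslin's comparison of localization sequences in Quillen K-theory and in étale cohomology, is irreducible and constitutes the bulk of the original Merkurjev-Suslin argument.
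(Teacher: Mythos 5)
The paper does not prove this statement: it is the Merkurjev--Suslin theorem, quoted from \cite{MS82} and explicitly ``stated here without proof,'' so there is no internal argument to compare yours against. Judged on its own terms, your proposal is a reasonable roadmap of the original proof --- reduction to $n=p$ prime, restriction--corestriction to force $\mu_p\subset L$, identification of $\alpha_{L,p}$ on symbols with classes of cyclic algebras, and the K-theoretic analysis of Severi--Brauer varieties --- but it is a roadmap, not a proof. The entire mathematical content is concentrated in steps you name and then defer: Quillen's computation of $K_*(\SB(A))$, Suslin's comparison of the localization sequences in K-theory and \'etale cohomology, and, most importantly, Hilbert's Theorem 90 for $K_2$ (exactness of $K_2(E)\xrightarrow{\ \sigma-1\ }K_2(E)\xrightarrow{\ N_{E/L}\ }K_2(L)$ for $E/L$ cyclic of degree $p$), which you do not mention at all and which is the actual engine behind both the injectivity argument and the induction that produces symbols representing a given Brauer class.

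Your description of injectivity (``any element in the kernel is already a symbol multiple of $[A]$'') also understates the difficulty: identifying the kernel of $k_2(L)\to k_2(L(X))$ as generated by symbols of the prescribed form is itself one of the main theorems of Merkurjev and Suslin, proved by a delicate specialization and induction argument over splitting fields, not by a diagram chase. Likewise the reduction from $n=p^k$ to $n=p$ via Bockstein sequences needs the injectivity statement at level $p$ as input, so the order of the reductions matters. As a citation-level summary of where the proof lives, what you wrote is accurate and consistent with how the present paper uses the result; as a proof it has a gap precisely where the theorem is hard, and for the purposes of this paper the correct move is the one the author makes --- cite \cite{MS82} and move on.
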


Recall the definition of a cyclic algebra over a field. %For $L/k$ a cyclic extension of fields, of degree $n$, with Galois group $G$ with a generator $\sigma$, and let $a \in k^*$. The ring $\displaystyle (a,L/k,\sigma):= \oplus_{i=0}^{i = n-1} L e_i$ with the multiplication $e^n = a$ and $e \alpha = \sigma(\alpha) e$ with $\alpha \in L$ is called a cyclic algebra. 
For a positive integer $n$, let $L$ be a field in which $n$ is invertible such that $L$ contains a primitive $n$-th root of unity $\omega$. For nonzero elements $a$ and $b$ of $L$, the associated \textit{cyclic algebra} is the central simple algebra of degree $n$ over $L$ defined by \[(a,b)_{\omega} = L\langle x,y\rangle/(x^n=a, y^n=b, xy=\omega yx).\]

We know from literature that the $n$-torsion of the Brauer group, $_n\text{Br}(L)$ is isomorphic with $H^2(L, \mu_n^{\otimes2})$ \cite[Section 1.3.4]{CTS}. So, we find that $\alpha_{L,n} : k_2(L) \to \ _n\text{Br}(L)$ sends the coset of the symbol $(a,b)$ to the class of the the cyclic algebra $(a,b)_\omega$, where $\omega$ is the primitive $n$-th root of unity in $L$. We thus get the following corollary, which conveys the surjectivity of $\alpha_{L,n}$.
\begin{cor}
    Let $\mu_n \subset L$ and let $A$ be a central simple algebra over $L$, with $[A] \in\ _n\text{Br}(L)$. Then $A$ is similar to a tensor product of cyclic algebras $(a_i, b_i)_\omega$, $a_i, b_i \in L^*$.
\end{cor}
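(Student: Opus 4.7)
The plan is to unwind the Merkurjev--Suslin isomorphism directly, using the computation already recorded in the text that $\alpha_{L,n}$ sends the coset of the symbol $(a,b)\in k_2(L)$ to the Brauer class of the cyclic algebra $(a,b)_\omega$. Since ${}_n\mathrm{Br}(L)\cong H^2(L,\mu_n^{\otimes 2})$ and $[A]\in {}_n\mathrm{Br}(L)$, the bijectivity of $\alpha_{L,n}$ provided by the Merkurjev--Suslin theorem yields a unique element $\xi\in k_2(L)$ with $\alpha_{L,n}(\xi)=[A]$.

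Next I would invoke Matsumoto's presentation of $K_2$, which exhibits $K_2(L)$, and hence its quotient $k_2(L)$, as an abelian group generated by Steinberg symbols $(a,b)$, $a,b\in L^*$. Thus $\xi$ admits a finite $\Z$-linear expression $\xi=\sum_{i=1}^{m} \varepsilon_i (a_i,b_i)$ with $\varepsilon_i\in\{\pm 1\}$. Because $(a^{-1},b)+(a,b)=(1,b)=0$ in $k_2(L)$, each term with a negative sign can be rewritten as an honest symbol $(a_i^{-1},b_i)$, so after relabeling we may assume $\xi=\sum_{i=1}^{m}(a_i,b_i)$ in $k_2(L)$.

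Applying $\alpha_{L,n}$ and using its additivity together with the identity $\alpha_{L,n}((a_i,b_i))=[(a_i,b_i)_\omega]$ yields
\[
[A]=\sum_{i=1}^{m}[(a_i,b_i)_\omega]\quad\text{in } {}_n\mathrm{Br}(L).
\]
Since the group law on $\mathrm{Br}(L)$ is induced by tensor product of central simple algebras and equality in $\mathrm{Br}(L)$ is precisely Brauer equivalence, this translates into $A \sim \bigotimes_{i=1}^{m}(a_i,b_i)_\omega$, which is the claim. The only substantive input is the Merkurjev--Suslin theorem itself, whose deep content supplies the surjectivity of $\alpha_{L,n}$; once that is in hand the remaining work is just bookkeeping with signs so as to exhibit the preimage as a genuine sum, rather than a formal difference, of symbols.
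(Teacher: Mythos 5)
Your argument is correct and is essentially the one the paper intends: the corollary is presented there as an immediate consequence of the surjectivity of $\alpha_{L,n}$ together with the fact that $k_2(L)$ is generated by symbols and that $\alpha_{L,n}$ sends the symbol $(a,b)$ to the class of $(a,b)_\omega$. Your additional bookkeeping with the identity $(a^{-1},b)=-(a,b)$ to realize the preimage as a genuine sum of symbols is a correct and worthwhile explicit detail that the paper leaves implicit.
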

The case $n=2$ is actually Merkurjev's theorem \cite{Mer81}.
\bigskip

To define a cyclic algebra over a ring $R$, let $n$ be an invertible positive integer in $R$. Suppose $R$ contains a primitive $n$-th root of unity $\omega$. For $a, b \in R^*$, the associated cyclic algebra is an Azumaya algebra of degree $n$ over $R$ defined by \[(a,b)_{\omega} = R\langle x,y\rangle/(x^n=a, y^n=b, xy=\omega yx).\]

The notion of central simple algebra over a field generalises to that of an \textit{Azumaya algebra} over a commutative ring. The following theorem is due to Azumaya (over a local ring), Auslander and Goldman (over an arbitrary commutative ring), and Grothendieck (over a scheme).

\begin{defin}[Azumaya algebra] \label{azumaya def}
    Let $R$ be a commutative ring and $A$ a ring over $R$ that is finitely generated and projective (equivalently, locally free) as an $R$-module. Let $A_S:= S \otimes_R A$ for a homomorphism of rings $R \to S$. A ring $A$ satisfying any of the following equivalent conditions is called an \textit{Azumaya algebra} over $R$.
    \begin{itemize}
        \item[(i)] The map $A \otimes_R A^{\text{op}} \to \text{End}_R (A)$ is an isomorphism.
        \item[(ii)] For every algebraically closed field $k$ and a homomorphism $R \to k$, $A_k \simeq \text{Mat}_n(k)$.
        \item[(iii)] For every maximal ideal $\mathfrak{m} \subset R$, let $k = R/\mathfrak{m}$; the ring $A_k$ is a central simple algebra over $k$.
    \end{itemize}

\end{defin}

An Azumaya algebra over a commutative ring $R$ is an algebra over $R$ that has an inverse up to Morita equivalence. That is, $A$ is an Azumaya algebra if there is an $R$-algebra $B$ such that $B \otimes_R A$ is Morita equivalent to $R$, which is the unit for the tensor product of $R$-algebras. Thus, Morita equivalence classes of Azumaya algebras over $R$ form a group, which is called the Brauer group of $R$, denoted by $\text{Br}(R)$.

\bigskip

We also record here the following well-known result. 
\begin{prop} \label{index of azumaya algebras} Let $R$ be a complete local domain with field of fractions $F$ and residue field $k$.
Let $A$ be an Azumaya algebra over $R$. Then 
$$ind(A \otimes_R F) \leq ind(A\otimes_R k).$$ 
\end{prop}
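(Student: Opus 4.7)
The plan is to find an $F$-rational point of the generalized Severi-Brauer variety $\text{SB}_i(A\otimes_R F)$, where $i = \text{ind}(A \otimes_R k)$; producing such a point forces $\text{ind}(A\otimes_R F)\mid i$ and hence yields the desired inequality. To set this up, let $d = \deg A$ and decompose the central simple $k$-algebra $A \otimes_R k \cong \mathrm{M}_{d/i}(D)$, where $D$ is a $k$-division algebra of degree $i$. A primitive idempotent $\bar e \in A \otimes_R k$ then cuts out a right ideal $\bar e(A\otimes_R k)$ which is simple of $k$-dimension $id$, i.e.\ of reduced dimension $i$.

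Next, I would lift $\bar e$ to an idempotent $e \in A$. Since $A$ is finitely generated projective over the local ring $R$ with maximal ideal $\mathfrak{m}$, the ideal $\mathfrak{m}A$ lies in the Jacobson radical of $A$ and $A$ is $\mathfrak{m}A$-adically complete. The classical Hensel-style idempotent-lifting procedure (inductively refining approximate idempotents modulo successive powers of $\mathfrak{m}A$ and passing to a limit) then produces an idempotent $e \in A$ reducing to $\bar e$.

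Finally, $eA$ is an $R$-direct summand of $A$, hence a finite free $R$-module, and its reduction modulo $\mathfrak{m}$ equals $\bar e(A\otimes_R k)$, of $k$-rank $id$. Consequently $eA \otimes_R F = e(A \otimes_R F)$ is a right ideal of $A \otimes_R F$ of reduced dimension $i$, furnishing the required $F$-point of $\text{SB}_i(A \otimes_R F)$. The one substantive ingredient is the idempotent-lifting step; this is standard for module-finite algebras over complete (or more generally Henselian) local rings and poses no real obstacle in the present setting, so the rest of the argument reduces to the compatibility of Severi-Brauer varieties with base change that is already invoked elsewhere in the paper.
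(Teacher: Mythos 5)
Your argument is correct, but it takes a genuinely different route from the paper. The paper works on the level of splitting fields: it takes a separable field extension $L/k$ of degree $n=\mathrm{ind}(A\otimes_R k)$ splitting the residue algebra, lifts its minimal polynomial to get an extension $S=R[X]/(g)$ of $R$ (a complete local ring with residue field $L$), and then invokes the Auslander--Goldman injectivity of $\mathrm{Br}(S)\to\mathrm{Br}(L)$ to conclude that $A\otimes_R S$ is split, whence $E=F[X]/(g)$ splits $A\otimes_R F$ and $\mathrm{ind}(A\otimes_R F)\le [E:F]=n$. You instead lift the witness of the index itself: a primitive idempotent of the closed fibre is lifted to an idempotent $e\in A$, and the free direct summand $eA$ specializes on the generic fibre to a right ideal of reduced dimension $i$, i.e.\ an $F$-point of $\mathrm{SB}_i(A\otimes_R F)$. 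Your version is more self-contained (idempotent lifting replaces the Brauer-group injectivity theorem, which is itself proved by such lifting arguments) and it meshes naturally with the generalized Severi--Brauer formalism already set up in Section~2; it also visibly yields the divisibility $\mathrm{ind}(A\otimes_R F)\mid\mathrm{ind}(A\otimes_R k)$ rather than just the inequality. One small caution, which applies equally to the paper's proof: in the intended application $R=k^{\circ}$ need not be Noetherian or $\mathfrak m$-adically complete (for a dense value group one has $\mathfrak m^2=\mathfrak m$), so the successive-approximation procedure modulo powers of $\mathfrak m A$ should be replaced by the statement that idempotents lift along $A\to A\otimes_R k$ for any module-finite algebra over a Henselian local ring -- a hypothesis your parenthetical already covers and which $k^{\circ}$ satisfies since $k$ is complete of rank one.
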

 
 \begin{proof} Note that $A \otimes_R k$ is a central simple algebra over $k$ \cite[Corollary 2.6]{Pres15}. Let $n  = $ ind$(A \otimes_R k)$. Then there exists a separable field extension $L/k$ of degree $n$ such that $(A \otimes_R k) \otimes L \simeq M_n(L)$  \cite[Proposition 2.25]{GS}.
Since $L/k$ is a finite separable extension, $L = k(\alpha)$ for some $\alpha \in L$. 
Let $ f(x) \in k[X]$ be the monic minimal polynomial of $\alpha$ over $k$.
 Let $g(X) \in R[X]$ be a monic polynomial of degree $n$ which maps to $f(X)$ in $k[X]$. 
 Since $f(X)$ is irreducible in $k[X]$, $g(X)$ is irreducible in $R[X]$. Let $S = R[X]/(g(X))$ and  $E = F[X]/(g(X))$.
 Since $R$ is a complete local ring, $S$ is
 also a complete local ring with residue field $L$ (see the proof of \cite[Theorem 6.3]{AG60}). 
  Since the map $Br(S) \to Br(L)$ is  injective
 \cite[Corollary 6.2]{AG60} and $(A\otimes_R S) \otimes_S L \simeq (A\otimes_R k) \otimes_k L$ is a matrix algebra,
 $A\otimes_R S$  represents the trivial element in $Br(S)$. Hence $(A\otimes_RF) \otimes E$ is a matrix algebra. 
 In particular ind$(A\otimes_R F) \leq [ E : F] = n$. 
 \end{proof}

The following discussion can be found in Commutative Algebra by Bourbaki \cite[Chap. 6, $\S$10, n◦2]{Bo}. For an abelian group $\Gamma$, define the \textit{rational rank} of $\Gamma$, denoted by \textit{rat.rank$(\Gamma)$}, to be the $\text{dim}_\Q (\Gamma \otimes_\Z \Q)$. The rational rank is an element of $\mathbb{N} \cup \{+\infty\}$. If $\Gamma'$ is a subgroup of $\Gamma$, then since $\Q$ is a flat $\Z$-module, we have that \textit{rat.rank$(\Gamma)$} $=$ \textit{rat.rank$(\Gamma')$} $+$ \textit{rat.rank$(\Gamma/\Gamma')$}. The rational rank of a group $\Gamma$ is zero if and only if $\Gamma$ is a torsion group. If $\Gamma$ is the value group of a valuation, then its rational rank is zero if and only if the valuation is the trivial valuation. Also, if the field $L$ is an algebraic extension of the valued field $k$, then the quotient group $\displaystyle |L^*|/|k^*|$ is a torsion group, and the residue field $\Tilde{L}$ is an algebraic extension of $\Tilde{k}$ \cite[Proposition 1.16]{MVaq}.

\section{Complete ultrametric fields}

Let $M \subset \mathbb{R}^*$ be a subgroup and let $\sqrt{M} := \{ a \in \mathbb{R}^* |\ a^z \in M\ \text{for some}\ z \in \Z_{\neq 0} \}$ be the divisible closure of $M$. It is a $\mathbb{Q}$-vector space, and suppose $dim_\mathbb{Q} (\sqrt{M})$ is finite. For a prime $l$, set $M^l = \{ m^l | m \in M \}$.

We want to understand what do elements of $M^* / {M^*}^l$ look like. 

There exists a $\mathbb{Q}$-basis $z_1, z_2, \dots, z_n$ of $\sqrt{M}$. By definition of $\sqrt{M}$, there exist non-zero integers $\alpha_1, \alpha_2, \dots, \alpha_n$ such that $z_1^{\alpha_1}, z_2^{\alpha_2}, \dots, z_n^{\alpha_n}$ are in $M$. Let $t_i := z_i^{\alpha_i} \in M$. 

Then $t_1, \dots, t_n$ is also a basis.
Thus, for any $t \in M$, there exist unique $p_1, p_2, \dots, p_n \in \mathbb{Q}$ such that 
\[
t = \prod_{i = 1}^n {t_i}^{p_i}.
\]

%Fix $t_1, t_2, \dots, t_n$. 

\begin{defin}
    Let $t \in M$. Let $t = \displaystyle\prod_{i=1}^n {t_i}^{s_i/r_i}$ for $\displaystyle\frac{s_i}{r_i} \in \mathbb{Q}$ with $gcd(s_i, r_i) = 1$ for $i = 1, 2, \dots, n$. Let $r$ be the lcm of $r_i, i = 1, 2, \dots, n$. We will say that $r$ is the \textit{order of $t$}.
\end{defin}

Let $k$ be an ultrametric field and $|\cdot|$ be the non-Archimedean valuation on $k$. Suppose that dim$_\Q \sqrt{| k^* |}$ (i.e., the rational rank of $|k^*|$) is finite, say $n=\text{dim}_\Q(\sqrt{|k^*|})$.

%If we work over a complete ultrametric base field $k$ satisfying these conditions, then for any $k$-analytic space and any of its points $x$, the field $\mathcal{H}(x)$ also satisfies them. 
\bigskip
In particular, from the above discussion we get that when $M = |k^*|$, there exist $\pi_1, \pi_2, \dots, \pi_n \in k^*$ with $|\pi_i| = t_i$ such that for any $t \in \sqrt{|k^*|}$, there exist unique $p_1, p_2, \dots, p_n \in \Q$ such that
\[
t = \prod_{i=1}^n {|\pi_i|}^{p_i} = \prod_{i=1}^n t_i^{p_i}.
\]
Let us fix such elements $\pi_1, \pi_2, \dots, \pi_n$ of $k^*$.

\begin{lemma}
    Let $k$ be a field and $m,n$ be positive integers. Suppose $m$ is coprime to $n$. Let $a \in k^*$. Then there exists an integer $m'$ such that $a = a^{m m'} \in k^*/(k^*)^n$.
\end{lemma}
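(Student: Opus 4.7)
The statement is essentially an elementary group-theoretic observation about the quotient $k^*/(k^*)^n$, and the plan is to reduce it to Bézout's identity. The key point is that $a \equiv a^{mm'} \pmod{(k^*)^n}$ is equivalent to the ratio $a^{mm'-1}$ lying in $(k^*)^n$, which will hold as soon as $n$ divides $mm'-1$, i.e., as soon as $mm' \equiv 1 \pmod{n}$.

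First, I would invoke the hypothesis $\gcd(m,n)=1$ and apply Bézout's identity in $\Z$: there exist integers $m'$ and $s$ such that
\[
m m' + s n = 1,
\]
which rearranges to $m m' - 1 = -s n$. Next, I would use this to compute
\[
a^{m m'} \;=\; a^{1 - s n} \;=\; a \cdot \bigl(a^{-s}\bigr)^{n}.
\]
Since $(a^{-s})^n \in (k^*)^n$, this equality shows that $a^{m m'}$ and $a$ differ by an $n$-th power, and hence represent the same class in $k^*/(k^*)^n$. That is exactly the conclusion $a = a^{m m'}$ in $k^*/(k^*)^n$.

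There is essentially no obstacle here; the statement is a direct consequence of Bézout, and the only care needed is to observe that $m'$ produced by Bézout may be negative, but since $k^*$ is a group this causes no difficulty (negative exponents are simply inverses, and raising an element of $(k^*)^n$ to any integer power keeps it in $(k^*)^n$). The lemma will then be used in the sequel, presumably together with the fixed generators $\pi_1,\dots,\pi_n$ of $\sqrt{|k^*|}$, to replace fractional-looking exponents by integer exponents modulo $n$-th powers when analysing the $l$-torsion in the Brauer group.
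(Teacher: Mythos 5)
Your proof is correct and is essentially the same as the paper's: both invoke Bézout's identity $mm'+nn'=1$ and observe that $a^{mm'}=a\cdot(a^{-n'})^{n}$ differs from $a$ by an $n$-th power. Your version just spells out the final step slightly more explicitly.
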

\begin{proof}
    Since $m$ and $n$ are coprime, there exists $m', n' \in \Z$ such that $m m' + n n' = 1$. Hence $a = a^{mm' + nn'} = a^{mm'} \in k^*/(k^*)^n$.
\end{proof}

\begin{theorem}[Main Decomposition Lemma]\label{main decomposition}
    Let $k$ be an ultramteric valued field, $rank_\mathbb{Q} (\sqrt{|k^*|}) = n$, and $l$ a prime such that $gcd (l, char(\Tilde{k})) = 1$. Let $a_1, a_2, \dots, a_s \in k^*$, there exist elements $c_1, c_2, \dots, c_n$ in $k^*$ such that any $a_i =\displaystyle u_i \prod_{j=1}^n c_j^{\mu_{ij}} (b_i)^l$, $\mu_{ij} \in \mathbb{Z}$, $0 \leq \mu_{ij} \leq l-1$ , and $b_i \in k$.
\end{theorem}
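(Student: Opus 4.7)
The plan is to reformulate the claim as a statement about the (additively-written) value group $\Gamma := v(k^*) \subset (\mathbb{R},+)$ modulo $l\Gamma$, and then handle that by the Euclidean algorithm once a suitable $\mathbb{Z}$-basis is in hand. A decomposition $a_i = u_i\prod_j c_j^{\mu_{ij}} b_i^l$ with $u_i$ a unit of the valuation ring is equivalent to finding $c_j\in k^*$ and exponents $\mu_{ij}\in\{0,\dots,l-1\}$ such that $v(a_i) \equiv \sum_j \mu_{ij}\, v(c_j) \pmod{l\Gamma}$, together with lifts to $k^*$.

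First I would reduce to a free abelian group of bounded rank. Set $\Gamma' := \langle v(a_1),\dots,v(a_s)\rangle\subset\Gamma$. Since $\Gamma$ embeds in $(\mathbb{R},+)$, it is torsion-free, so the finitely generated subgroup $\Gamma'$ is a free $\mathbb{Z}$-module whose rank equals $\dim_{\mathbb{Q}}(\Gamma'\otimes\mathbb{Q})\leq \dim_{\mathbb{Q}}(\sqrt{|k^*|}) = n$. Choose a $\mathbb{Z}$-basis $\delta_1,\dots,\delta_m$ of $\Gamma'$ with $m\leq n$, and for each $1\leq j\leq m$ pick a lift $c_j\in k^*$ with $v(c_j)=\delta_j$; for $m<j\leq n$ take $c_j=1$ and $\mu_{ij}=0$, producing the required $n$ elements.

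Next I would apply division by $l$ coordinatewise. For each $i$, write $v(a_i)=\sum_{j=1}^m n_{ij}\,\delta_j$ with $n_{ij}\in\mathbb{Z}$, and use the Euclidean algorithm to write $n_{ij} = l\,q_{ij} + \mu_{ij}$ with $0\leq \mu_{ij} < l$. Put $b_i := \prod_{j=1}^m c_j^{q_{ij}}\in k^*$ and $u_i := a_i \bigl(\prod_{j=1}^n c_j^{\mu_{ij}}\bigr)^{-1} b_i^{-l}$. By construction $v(u_i) = v(a_i) - \sum_j n_{ij}\,\delta_j = 0$, so $u_i$ is a unit of $k^\circ$, and rearranging gives $a_i = u_i \prod_j c_j^{\mu_{ij}} b_i^l$ as required.

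I do not anticipate a genuine obstacle: the only point that needs to be registered is that $\Gamma\hookrightarrow\mathbb{R}$ forces the finitely generated subgroup $\Gamma'$ to be free of $\mathbb{Z}$-rank at most $n$, after which everything is bookkeeping with the division algorithm. The coprimality hypothesis $\gcd(l,\mathrm{char}(\widetilde{k}))=1$ is not used in the proof of the lemma itself; it is carried along for the applications in later sections, where one must extract $l$-th roots of the unit factor $u_i$ via Hensel-type arguments.
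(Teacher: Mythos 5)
Your argument is correct, and it is genuinely different from (and cleaner than) the paper's. The paper fixes once and for all a $\mathbb{Q}$-basis $t_1,\dots,t_n$ of $\sqrt{|k^*|}$ coming from elements $\pi_j\in k^*$, writes each $|a_i|=\prod_j t_j^{r_{ij}}$ with \emph{rational} exponents, and then must fight the denominators: it isolates the $l$-part of each denominator, uses the coprimality trick $a_i\equiv a_i^{z}\ (\mathrm{mod}\ (k^*)^l)$ to discard the prime-to-$l$ part, and then performs a Gaussian-elimination-type induction in which the element with the largest $l$-power denominator in the $t_1$-coordinate is normalized and promoted to $c_1$, the $t_1$-components of the remaining $a_i$ are cleared by multiplying by powers of $c_1$, and so on for $t_2,\dots,t_n$. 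You sidestep all of this by restricting to the subgroup $\Gamma'=\langle v(a_1),\dots,v(a_s)\rangle$ of the value group, which, being a finitely generated subgroup of $(\mathbb{R},+)$, is free of rank $m\leq\mathrm{rat.rank}(\Gamma)=n$; with an integral basis in hand the decomposition is just the division algorithm on the exponents, and the unit $u_i$ appears by inspection of valuations. This buys a shorter and more transparent proof at the cost of a $c_j$-system adapted to the given finite family $\{a_i\}$ rather than to a fixed basis of $\sqrt{|k^*|}$; since the statement (and the paper's own proof, which takes $c_1:=a_1$) already lets the $c_j$ depend on the $a_i$, and the downstream applications only need $[k(\sqrt[l]{c_1},\dots,\sqrt[l]{c_n}):k]\leq l^n$ resp.\ at most $2^n$ square classes $\theta_j$, padding with $c_j=1$ for $m<j\leq n$ loses nothing. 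Your closing remark is also accurate: neither proof actually uses $\gcd(l,\mathrm{char}(\tilde{k}))=1$ for the decomposition itself; it matters only in the later Brauer-group applications.
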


\begin{proof}
    Let us fix elements $\pi_1, \pi_2, \dots, \pi_n$ in $k^*$ such that $\{t_i = |\pi_i|\}$ is a basis of $\sqrt{|k^*|}$. 

   % By the previous lemma, we can replace $a_i$ by $a_i^{z_{i1}}$ for any $z_{i1}$ which is coprime to $l$.
    
    Since $a_i \in k^*$,$|a_i| \in \sqrt{|k^*|}$. So there exist unique $p_1, p_2, \dots, p_n \in \mathbb{Q}$ such that, $|a_i| = \displaystyle \prod_{j=1}^n |\pi_j|^{p_{ij}} = \prod_{j=1}^n t_j^{p_{ij}} =  t_1^{r_{i1}} \prod_{j=2}^n t_j^{r_{ij}}$, where $r_{ij} \in \Q$. Let $y_{i1}$ be the highest power of $l$ that divides the denominator of $r_{i1}$. Then we can write $|a_i| = t_1^{x_{i1}/l^{y_{i1}}z_{i1}} \prod_{j=2}^n t_j^{r_{ij}}$, where $x_{ij}, y_{i1}, z_{i1} \in \Z$, and $r_{ij} \in \Q$. 
    
    Since $l \nmid z_{i1}$, from the previous lemma, $a_i \equiv a_i^{z_{i1}}(\mod{(k^*)^l})$.

    Without loss of generality, we can say that any $a_i \in k^*$ has $\displaystyle |a_i^{z_{i1}}| = t_1^{x_{i1}/l^{y_{i1}}} \prod_{j=2}^n t_j^{r_{ij}}$, where $r_{ij} \in \Q$. Note that gcd($x_{i1}, l^{y_{i1}}$) $= 1$.

    Case(i): Suppose $y_{i1} = 0$ for all $i$. So, $\displaystyle |a_i| = t_1^{x_{i1}} \prod_{j=2}^n t_j^{r_{ij}}$. Considering the element $\pi_1^{-x_{i1}} a_i \in k^*$, $\displaystyle |\pi_1^{-x_{i1}}a_i| = \prod_{j=2}^n t_j^{r_{ij}}$, where $r_{ij} \in \Q$.

    Case(ii): Suppose $y_{i1} \geq 1$ for some $i$. Without loss of generality, say $y_{11} = \max_i \{y_{i1}\} \geq 1$. So, $\displaystyle |a_1| = t_1^{x_{11}/l^{y_{11}}} \prod_{j=2}^n t_j^{r_{1j}}$. We also know that $l \nmid x_{11}$, so we can choose $x_{11}', x_{11}'' \in \Z$ such that $l \nmid x_{11}'$ and $x_{11} x_{11}' + l^{y_{11}+1} x_{11}'' = 1$.

    So, $\displaystyle a_1 \equiv a_1^{x_{11}'} (\mod{(k^*)^l})$, implying $\displaystyle |a_1^{x_{11}'}| = t_1^{x_{11} x_{11}'/l^{y_{11}}} \prod_{j=2}^n t_j^{r_{1j} x_{11}'}$. Now consider the exponent of $t_1$:
    \[
    \frac{x_{11} x_{11}'}{l^{y_{11}}} = \frac{1 - l^{y_{11} + 1} x_{11}''}{l^{y_{11}}} = \frac{1}{l^{y_{11}}} - l x_{11}''.
    \]
    So $\displaystyle |a_1| \equiv |a_1^{x_{11}'}| \equiv t^{1/l^{y_{11}}} \prod_{j=2}^n t_j^{\lambda_{ij}} (\mod{(k^*)^l})$. Without loss of generality, say $\displaystyle |a_1| = t_1^{1/l^{y_{11}}} \prod_{j=2}^n t_j^{r_{1j}}$ and $\displaystyle |a_2| = t_1^{x_{21}/l^{y_{21}}} \prod_{j=2}^n t_j^{r_{2j}}$. Now consider the element $\displaystyle |a_1^{- x_{21} l^{y_{11} - y_{21}}} a_2| = \displaystyle t_1^{\frac{-l^{y_{11} - y_{21}}}{l^{y_{11}}} + \frac{x_{21}}{l^{y_{21}}}} \prod_{j=2}^n t_j^{\lambda_j}$, where $\lambda_j \in \Q$.

    Consider again the exponent of $t_1$:
    \[
    \frac{-x_{21} l^{y_{11}-y_{21}}}{l^{y_{11}}} + \frac{x_{21}}{l^{y_{21}}} = 0.
    \] So for $x_{21} l^{y_{11} - y_{21}} \in \Z$, we have $\displaystyle |a_1^{- x_{21} l^{y_{11} - y_{21}}} a_2| = \prod_{j=2}^n t_j^{\lambda_j}$, where $\lambda_j \in \Q$. Choose $c_1 := a_1$. So for all $i$, $\displaystyle |a_1^{- x_{i1} l^{y_{11} - y_{i1}}} a_i| = \prod_{j=2}^n t_j^{\lambda_{ij}}$, where $\lambda_{ij} \in \Q$.

    Continuing this process,  we get $c_2, \dots , c_n$,  $b_i \in k^*$  and $\mu_{ij} \in \Z$ such that 
$$ \mid \! a_i b_i^\ell \!  \mid =  \mid  \!  \prod_{j=1}^n c_j^{\mu_{ij}} \! \mid . $$
Since for any $u \in k^*$, $u$ is a unit in the valuation ring of $k$ if and only if $\mid \! u  \! \mid = 1$,
we have 
$$a_i =  u_i  b_i^\ell \prod_j c_j^{\mu_{ij}} $$
 for some units $u_i \in k^\circ$, the valuation ring of $k$, $b_i \in k^*$ and 
$\mu_{ij} \in \Z$. 

\end{proof}

\begin{theorem} \label{leq 2^n}
    Let $k$ be a complete ultrametric field with $\text{char}(\Tilde{k}) \neq 2$. Suppose that $\text{dim}_\Q (\sqrt{\mid k^* \mid}) = n$ is finite. Then $u(k) \leq 2^n u(\Tilde{k})$.
\end{theorem}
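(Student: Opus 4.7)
\smallskip

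The plan is to take an anisotropic quadratic form $q = \langle a_1,\ldots,a_s\rangle$ over $k$ and deduce that $s \leq 2^n u(\Tilde{k})$. First I would apply Theorem \ref{main decomposition} with $\ell = 2$ (permissible because $\text{char}(\Tilde{k}) \neq 2$) to the coefficients $a_1,\ldots,a_s$, obtaining units $u_i \in (k^\circ)^{\times}$, elements $c_1,\ldots,c_n \in k^*$, exponents $\mu_{ij} \in \{0,1\}$ and $b_i \in k^*$ with
\[
a_i = u_i \prod_{j=1}^n c_j^{\mu_{ij}} b_i^{\,2}.
\]
Since multiplying a diagonal entry by a square does not change the isometry class of the form, I may replace each $a_i$ by $u_i \, c^{\mu_i}$, where $c^{\mu} := \prod_j c_j^{\mu_j}$ and $\mu_i := (\mu_{i1},\ldots,\mu_{in})$.

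Next I would partition the index set $\{1,\ldots,s\}$ according to the vector $\mu \in \{0,1\}^n$: set $I_\mu = \{i : \mu_i = \mu\}$ and $q_\mu = \langle u_i : i \in I_\mu\rangle$. Then $q$ is isometric to the orthogonal sum
\[
q \;\cong\; \perp_{\mu \in \{0,1\}^n} \; c^{\mu} \cdot q_\mu.
\]
Because the exponent vectors take only $2^n$ possible values, if $s > 2^n u(\Tilde{k})$ then by pigeonhole there exists some $\mu^{\star}$ with $|I_{\mu^{\star}}| > u(\Tilde{k})$. Since $q$ is anisotropic, every sub-form of it is anisotropic; in particular the sub-form $c^{\mu^{\star}} q_{\mu^{\star}}$, and hence (by scaling) $q_{\mu^{\star}}$ itself, is anisotropic over $k$.

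Finally I would derive a contradiction by reducing modulo $k^{\circ\circ}$. Because all coefficients of $q_{\mu^{\star}}$ are units in $k^\circ$, their images in $\Tilde{k}$ are non-zero, so the residue form $\overline{q_{\mu^{\star}}} = \langle \bar{u}_i : i \in I_{\mu^{\star}}\rangle$ is a non-degenerate quadratic form over $\Tilde{k}$ of the same dimension $|I_{\mu^{\star}}| > u(\Tilde{k})$; hence $\overline{q_{\mu^{\star}}}$ is isotropic. Taking any non-trivial isotropic vector $(\bar{x}_i)$ over $\Tilde{k}$, some $\bar{x}_i$ is non-zero, so the partial derivative $2 u_i x_i$ of the diagonal polynomial $\sum u_i X_i^2$ is a unit at a lift $(x_i) \in (k^\circ)^{|I_{\mu^{\star}}|}$. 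Hensel's lemma (applicable because $k$ is complete of rank one and $\text{char}(\Tilde{k}) \neq 2$) then lifts this to an actual zero of $q_{\mu^{\star}}$ over $k^\circ$, contradicting anisotropy.

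The main obstacle I anticipate is making sure the passage from the residue-field isotropy to $k$-isotropy is valid: one must verify the non-degeneracy hypothesis of Hensel's lemma, which relies crucially on $2$ being invertible in $\Tilde{k}$ and on each $u_i$ being a unit (both of which are guaranteed by our setup). A secondary subtlety is that the lemma need not bound $u(k)$ by a full Springer-type equality — one only needs the direction that sub-forms of anisotropic forms are anisotropic, so the $c^\mu$'s do not need to lie in distinct square classes of $k$.
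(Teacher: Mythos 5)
Your proposal is correct and follows essentially the same route as the paper: apply the Main Decomposition Lemma with $\ell=2$, absorb squares, sort the coefficients into the $2^n$ classes indexed by the exponent vectors of the $c_j$'s, apply pigeonhole to find a unit-coefficient subform of dimension exceeding $u(\Tilde{k})$, and conclude by residue-field isotropy. The only difference is that you spell out the Hensel-lifting step (which the paper leaves implicit in the assertion that $\sigma_{\theta_i}$ isotropic over $\Tilde{k}$ implies isotropic over $k$), and your justification of it --- completeness of the rank-one valuation plus $\mathrm{char}(\Tilde{k})\neq 2$ and unit coefficients --- is accurate.
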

\begin{proof}
    Given a quadratic form $\displaystyle q = \langle a_1, \dots, a_s \rangle$ over $k$ of dimension $s > 2^n u(\Tilde{k})$, such that $\mid a_i \mid \in \sqrt{\mid k^* \mid}$ for all $i$. From the main decomposition theorem, we have elements $c_1, \dots, c_n$ in $k^*$ such that each $a_i = \displaystyle u_i \prod_{j=1}^n {c_j}^{\lambda_{ij}} b_i^2$, for $u_i$ unit in $(k^\circ)^*$, $b_i \in k^*$, and $\lambda_{ij} \in \{ 0, 1\}$.
    
    So there are $2^n$ possibilities for $\displaystyle \prod_{j=1}^n c_j^{\lambda_{ij}}$. Let's call them $\theta_1, \theta_2, \dots, \theta_{2^n}$. Thus each $a_i \equiv u_i \theta_j (\mod{(L^*)^2})$, for some $j$ with $1 \leq j \leq 2^n$.

    So after re-indexing, we have, $\displaystyle \langle a_1, \dots, a_s \rangle \cong \langle u_1, \dots, u_{s_1} \rangle \theta_1 \perp \langle u_{s_1 + 1}, \dots, u_{s_2} \rangle \theta_2 \perp \dots \perp \langle u_{s_{2^{n-1}+1}}, \dots, u_{s_{2^n}} \rangle \theta_{2^n}$, where all the $s_i$'s add up to $s$. Consequently, for any $\theta_i$ as above, there exists a diagonal quadratic form $\displaystyle \sigma_{\theta_i}$ with coefficients in $(k^\circ)^*$ such that $q$ is $k$-isometric to $\displaystyle \perp_{\sigma \in Q} \theta_i \cdot \sigma_{\theta_i}$. 

    Since $s > 2^n u(\Tilde{k})$, there exists $i$ such that $\displaystyle \text{dim}(\sigma_{\theta_i}) = \text{dim}(\langle u_{s_i + 1}, \dots, u_{s_{i+1}} \rangle) = s_{i+1} - s_i > u(\Tilde{k})$. Thus $\sigma_{\theta_i}$ is isotropic over $\Tilde{k}$. So $\sigma_{\theta_i}$ is isotropic over $k$. Thus $q$ is isotropic over $k$, making $u(k) \leq 2^n u(\Tilde{k})$.    
\end{proof}

 This gives us a refinement of the results on $u$-invariant from Mehmeti's paper \cite{Meh19}. 
%(page 35, Mehmeti's patching paper)

%\begin{theorem}[Voevodsky]
%    Let $F$ be a field and $l$ a prime not equal to the $\text{char}(F)$. Then every element of $H^n (F, \mu_l^{\otimes n})$ is a sum of symbols.
%\end{theorem}

\begin{theorem} \label{A split}
    Let $k$ be an ultrametric field such that $\text{dim}_\Q (\sqrt{\mid k^* \mid}) = n$ is finite. Let  $\Tilde{k}$ be the residue field of $k$ and $l$ such that $l \neq \text{char}(k)$. Let $\mu_l \subset k$. Let $A \in \ _l\text{Br}(k)$. Then there exists $a_1, a_2, \dots, a_n$ and $b_1, b_2, \dots, b_n$ in $k$; $A_0 \in \ _l\text{Br} (k^\circ)$ such that \[
   \displaystyle A = A_0 \otimes \prod_{i=1}^n (a_i, b_i)_l .
    \]
\end{theorem}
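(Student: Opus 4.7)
The plan is to apply the Merkurjev--Suslin theorem to write $A$ as a product of cyclic symbols $(x_i,y_i)_l$, use the Main Decomposition Lemma (Theorem \ref{main decomposition}) to split each entry into a unit, a monomial in the fixed elements $c_1,\ldots,c_n\in k^*$, and an $l$-th power, and then exploit bilinearity of cyclic algebras in the Brauer group to consolidate the ``ramified'' part into one symbol per $c_j$. The hypotheses $\mu_l\subset k$ and $\gcd(l,\mathrm{char}(\tilde{k}))=1$ ensure that $l$ is a unit in $k^\circ$ and $\mu_l\subset (k^\circ)^*$, so any cyclic algebra $(u,v)_l$ with $u,v\in (k^\circ)^*$ is genuinely an Azumaya algebra over $k^\circ$ whose class lies in ${}_l\mathrm{Br}(k^\circ)$; this justifies the target decomposition once the symbols have been properly arranged.

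First I would invoke Merkurjev--Suslin (via the corollary on surjectivity of $\alpha_{k,l}$) to write $A=\prod_{i=1}^s(x_i,y_i)_l$ in ${}_l\mathrm{Br}(k)$. Applying Theorem \ref{main decomposition} to the finite set $\{x_1,y_1,\ldots,x_s,y_s\}$ produces common elements $c_1,\ldots,c_n\in k^*$ together with
\[ x_i = u_i\prod_{j}c_j^{\mu_{ij}}\beta_i^l,\qquad y_i = v_i\prod_{j}c_j^{\nu_{ij}}\gamma_i^l, \]
where $u_i,v_i\in(k^\circ)^*$, $\beta_i,\gamma_i\in k^*$, and $\mu_{ij},\nu_{ij}\in\{0,\ldots,l-1\}$. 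Expanding each $(x_i,y_i)_l$ by bilinearity and discarding the $l$-th power entries (which trivialize in ${}_l\mathrm{Br}$), I would obtain
\[ A = \prod_i (u_i,v_i)_l\cdot\prod_{j}(W_j,c_j)_l\cdot\prod_{j,k}(c_j,c_k)_l^{N_{jk}}, \]
where $W_j:=\bigl(\prod_i u_i^{\nu_{ij}}\bigr)\bigl(\prod_i v_i^{\mu_{ij}}\bigr)^{-1}\in(k^\circ)^*$ and $N_{jk}:=\sum_i\mu_{ij}\nu_{ik}$, having used $(c_j,v)_l=(v,c_j)_l^{-1}$ to move each unit into the first slot.

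The main obstacle is the residual block $\prod_{j,k}(c_j,c_k)_l^{N_{jk}}$, which has no unit slot and so does not yet fit the desired shape. I would handle it in two steps: for $j\neq k$, rewrite $(c_j,c_k)_l^{N_{jk}}=(c_j^{N_{jk}},c_k)_l$ and absorb the factor into the first slot of the $k$-th symbol; for $j=k$, invoke the classical Steinberg identity $(c_k,c_k)_l=(-1,c_k)_l$, derived from $(c_k,-c_k)_l=1$, to replace $(c_k,c_k)_l^{N_{kk}}$ by $\bigl((-1)^{N_{kk}},c_k\bigr)_l$, whose first slot is now a unit in $(k^\circ)^*$. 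Setting
\[ a_k := W_k\cdot(-1)^{N_{kk}}\cdot\prod_{j\neq k}c_j^{N_{jk}}\in k^*,\qquad b_k:=c_k,\qquad A_0:=\prod_i(u_i,v_i)_l, \]
the product telescopes to $A = A_0\otimes\prod_{k=1}^n(a_k,b_k)_l$, and $A_0$ is a product of Azumaya classes over $k^\circ$ of exponent dividing $l$, hence lies in ${}_l\mathrm{Br}(k^\circ)$. The principal technical nuisance is the bookkeeping in the consolidations above---in particular keeping track of signs from the diagonal identity $(c_k,c_k)_l=(-1,c_k)_l$ and being careful that the order of the non-commutative multiplication in ${}_l\mathrm{Br}$ does not spoil the absorption---but I do not anticipate any deeper obstruction beyond this.
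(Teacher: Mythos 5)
Your proposal is correct and follows essentially the same route as the paper: Merkurjev--Suslin, then the Main Decomposition Lemma applied to all the symbol entries, then bilinearity of symbols to consolidate the monomials in the $c_j$ into one symbol per $c_j$, leaving the unit part as an Azumaya class over $k^\circ$. Your extra care with the diagonal terms via the Steinberg identity $(c_k,c_k)_l=(-1,c_k)_l$ is harmless but not needed, since the statement only requires $a_i\in k$ (not units), and the Brauer group is abelian so the ordering worry at the end is moot.
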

\begin{proof}
    Let $A \in \ _l\text{Br}(k)$. Merkurjev-Suslin's theorem implies that $A = \displaystyle \prod_{i=1}^m (\alpha_i, \beta_i)_l$ for some $m \in \mathbb{N}$. By Theorem \ref{main decomposition}, there exists $c_1, c_2, \dots, c_n$ in $k^*$ such that $\alpha_i = u_i \prod c_j^{r_{ij}}$ and $\beta_i = v_i \prod c_j^{s_{ij}}$, where $r_{ij}$ and $s_{ij}$ are integers and $u_i, v_i$ are units.

    So, \begin{align*} (\alpha_i, \beta_i) = (u_i \prod c_j^{r_{ij}}, v_i \prod c_j^{s_{ij}}) = (u_i, v_i) \otimes \sum (u_i, c_j^{s_{ij}}) \otimes \sum (c_j^{r_{ij}}, v_i) \otimes \sum (\prod c_j^{r_{ij}}, c_j^{s_{ij}}).
   \end{align*}
   Let $\prod c_j^{r_{ij}} = d_i$. So we get that
   \begin{align*}
      (\alpha_i, \beta_i) = (u_i, v_i) \otimes \sum (u_i, c_j^{s_{ij}}) \otimes \sum (c_j^{r_{ij}}, v_i) \otimes \sum (d_i, c_j^{s_{ij}})\\
      = (u_i, v_i) \otimes \sum (u_i^{s_{ij}}, c_j) \otimes \sum (v_i^{-r_{ij}}, c_j) \otimes \sum (d_i^{s_{ij}}, c_j),
   \end{align*} the last equality coming from properties of symbols, i.e., $(\alpha, \beta^j) = (\alpha^j, \beta)$ and $(\alpha,\beta) = - (\beta,\alpha) = (\beta^{-1}, \alpha)$. Now combining the terms, we get the following equality
   \begin{align*}
     (\alpha_i, \beta_i)  = (u_i, v_i) \otimes \displaystyle \sum_{j=1}^n (u_i^{s_{ij}} v_i^{-r_{ij}} d_i^{s_{ij}}, c_j).
   \end{align*} Let $x_j = u_i^{s_{ij}} v_i^{-r_{ij}} d_i^{s_{ij}}$ in $k^*$. Thus, we have $A = \displaystyle \prod_{i=1}^m \big( (u_i, v_i) \otimes \sum_{j=1}^n (x_j, c_j) \big)$.

   Let $\displaystyle A_0 = \otimes (u_i, v_i)_{k^\circ}$. Since each cyclic algebra $\displaystyle (u_i, v_i)_{k^\circ}$ is an Azumaya algebra over $k^\circ$, $A_0 \in \text{Br}(k^\circ)$ and we thus have $\displaystyle A = A_0 \otimes \prod_{i=1}^n (a_i, b_i)$.
 
\end{proof}

Recall the following definition. Let $K$ be any field. For a prime $l$, define the \textit{Brauer $l$-dimension} of $K$, denoted by Br$_l$dim($K$), to be the smallest integer $d \geq 0$ such that for every finite extension $L$ of $K$ and for every central simple algebra $A$ over $L$ of period a power of $l$, ind($A$) divides per($A$)$^d$.

\begin{theorem}\label{leq d+n}
    Let $k$ be a complete ultrametric field. Suppode $\text{dim}_\mathbb{Q} (\sqrt{|k^*|}) = n$ is finite. Let $l$ be a prime such that $l \neq \text{char}(\Tilde{k})$ and $\mu_l \subset k$. Then the $\text{Br}_l \text{dim} (k) \leq \text{Br}_l \text{dim} (\Tilde{k}) + n$.
\end{theorem}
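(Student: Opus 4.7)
The plan is to control the index of an arbitrary $l$-power-period central simple algebra over $k$ (or over a finite extension of $k$) by decomposing it via Theorem~\ref{A split} and then estimating each factor with sub-multiplicativity of the index together with Proposition~\ref{index of azumaya algebras}.

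Let $L/k$ be a finite extension and $A$ a central simple algebra over $L$ with $\text{per}(A) = l^s$; I must show $\text{ind}(A)$ divides $l^{s(d+n)}$. The field $L$ inherits every hypothesis of the theorem: it is a complete ultrametric field (being a finite extension of such), $\dim_\Q \sqrt{|L^*|} = n$ because $|L^*|/|k^*|$ is torsion, $\mu_l \subset L$, and $\tilde L / \tilde k$ is finite so that $\text{Br}_l\text{dim}(\tilde L) \leq d$ by the very definition of Brauer $l$-dimension. Thus I may take $L = k$. Handle $s = 1$ first. By Theorem~\ref{A split}, $A \sim A_0 \otimes \prod_{i=1}^n (a_i, b_i)_l$ in $\text{Br}(k)$ with $A_0 \in {}_l\text{Br}(k^\circ)$. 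The index is sub-multiplicative on tensor products of central simple algebras; each cyclic symbol $(a_i, b_i)_l$ has degree $l$ hence index at most $l$; and for the Azumaya factor, Proposition~\ref{index of azumaya algebras} applied to the complete local domain $k^\circ$ gives $\text{ind}(A_0 \otimes_{k^\circ} k) \leq \text{ind}(A_0 \otimes_{k^\circ} \tilde k)$. The reduction $A_0 \otimes \tilde k$ lies in ${}_l\text{Br}(\tilde k)$, so by $\text{Br}_l\text{dim}(\tilde k) = d$ its index divides $l^d$. Multiplying these contributions: $\text{ind}(A) \mid l^{d+n} = \text{per}(A)^{d+n}$.

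For general $\text{per}(A) = l^s$ with $s \geq 2$ the same blueprint works once Theorem~\ref{A split} is extended to ${}_{l^s}\text{Br}(k)$. First, the Main Decomposition Lemma extends verbatim to $l^s$-th powers, by reading $l^s$ in place of $l$ throughout the exponent manipulations. Combining this with Merkurjev--Suslin at level $l^s$ yields a decomposition $A \sim A_0 \otimes \prod_{i=1}^n (a_i, b_i)_{l^s}$ with $A_0 \in {}_{l^s}\text{Br}(k^\circ)$; each $l^s$-symbol has index at most $l^s$ and the Azumaya part is bounded by $l^{sd}$ exactly as before, giving $\text{ind}(A) \mid l^{s(d+n)}$. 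The main technical obstacle is that Merkurjev--Suslin at level $l^s$ ordinarily demands $\mu_{l^s} \subset k$ while the standing hypothesis only guarantees $\mu_l \subset k$; since $[k(\mu_{l^s}):k]$ is itself an $l$-power, a naive base change to $k(\mu_{l^s})$ would introduce an unwanted $l$-factor in the index bound. To preserve the optimal exponent $s(d+n)$ one can either replace the symbols $(a_i, b_i)_{l^s}$ by generalised cyclic algebras attached to degree-$l^s$ cyclic Galois extensions of $k$ (which exist in the tame setting $l \neq \text{char}(\tilde k)$ without further roots of unity and still have index at most $l^s$), or run a corestriction argument along the tower $k \subset k(\mu_{l^s})$ that descends the bound without loss; overcoming this roots-of-unity issue is the only non-routine step.
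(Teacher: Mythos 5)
Your treatment of the period-$l$ case is correct and is essentially the paper's own argument: decompose $A$ via Theorem \ref{A split}, bound the symbol part by $l^n$ (you do this by sub-multiplicativity of the index; the paper instead passes to the splitting field $k(\sqrt[l]{c_1},\dots,\sqrt[l]{c_n})$ of degree dividing $l^n$ --- the two give the same bound), and bound the Azumaya part by Proposition \ref{index of azumaya algebras} together with $\text{Br}_l\text{dim}(\tilde{k})\leq d$. Your explicit reduction to $L=k$ --- checking that a finite extension $L/k$ inherits completeness, the rational rank $n$, the roots of unity $\mu_l$, and the bound $\text{Br}_l\text{dim}(\tilde{L})\leq d$ --- is a point the paper passes over in silence, and you handle it correctly.

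Where the proposal falls short is the case $\text{per}(A)=l^s$ with $s\geq 2$. The route you sketch, extending the Main Decomposition Lemma and Theorem \ref{A split} to level $l^s$, runs into the $\mu_{l^s}$ problem exactly as you say, and neither of your two suggested fixes is actually carried out, so as written this is a gap. The detour is unnecessary: the standard reduction is an induction on $s$ using only the period-$l$ bound over finite extensions. Given $A$ over $L$ with $\text{per}(A)=l^s$, the class of $A^{\otimes l}$ has period $l^{s-1}$, so by induction there is a finite separable splitting field $E/L$ of $A^{\otimes l}$ with $[E:L]=\text{ind}(A^{\otimes l})$ dividing $l^{(s-1)(d+n)}$; then $A_E$ has period dividing $l$, and $E$ is again a finite extension of $k$ satisfying all the hypotheses, so $\text{ind}(A_E)$ divides $l^{d+n}$, whence $\text{ind}(A)$ divides $[E:L]\cdot\text{ind}(A_E)$, which divides $l^{s(d+n)}$. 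This uses no Merkurjev--Suslin beyond level $l$ and no roots of unity beyond $\mu_l$. (The paper's proof is itself silent on $s\geq 2$ and on finite extensions; with the induction above substituted for your level-$l^s$ sketch, your write-up would in fact be more complete than the source.)
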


\begin{proof} 
    Consider the $l$-torsion of Br$(k)$, $_l\text{Br}(k)$. Then for a given $A \in \ _l \text{Br} (k)$, by Theorem \ref{A split}, there exists $A_0 \in \ _l \text{Br}(k^\circ)$ and $c_1, c_2, \dots, c_n \in k^*$ such that $(A - A_0) \otimes\ k(\displaystyle \sqrt[l]{c_1}, \sqrt[l]{c_2}, \dots, \sqrt[l]{c_n})$ is split. Let $L = k(\displaystyle \sqrt[l]{c_1}, \sqrt[l]{c_2}, \dots, \sqrt[l]{c_n})$, so $(A - A_0) \otimes L$ is split.

    This implies that $\text{ind}(A)$ divides $\text{ind}(A_0) l^{n}$. %Since $k$ is complete, $A_0 \in \ _l\text{Br}(k^\circ)$. 
    Since $k^\circ$ is complete, it follows from Proposition \ref{index of azumaya algebras} that $\displaystyle \text{ind}(A_0) \leq \text{ind}(A_0 \otimes_{k^\circ} \Tilde{k})$, hence divides $\text{Br}_l \text{dim}(\Tilde{k})$.  So, $\text{Br}_l \text{dim} (k) \leq \text{Br}_l \text{dim} (\Tilde{k}) + n$.

\end{proof}

\begin{cor}
    Let $k$ be a complete ultrametric field. Suppose $\text{dim}_\mathbb{Q} (\sqrt{|k^*|}) = n$ is finite. Let $l$ be a prime such that $l \neq \text{char}(\Tilde{k})$. Then the $\text{Br}_{l} \text{dim} (k) \leq \text{Br}_{l} \text{dim} (\Tilde{k}) + n$.
\end{cor}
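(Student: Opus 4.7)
The plan is to reduce to Theorem \ref{leq d+n} by passing to the field $k' := k(\mu_l)$. Since $l \neq \text{char}(\tilde{k})$, the extension $k'/k$ is separable of degree dividing $l-1$, hence coprime to $l$. As a finite extension of a complete valued field, $k'$ is itself a complete ultrametric field; its residue field $\widetilde{k'}$ is a finite extension of $\tilde{k}$, so $\text{Br}_l\text{dim}(\widetilde{k'}) \leq \text{Br}_l\text{dim}(\tilde{k})$ directly from the definition of Brauer $l$-dimension (every finite extension of $\widetilde{k'}$ is also a finite extension of $\tilde{k}$). Moreover, since $|{k'}^*|/|k^*|$ is torsion, $\sqrt{|{k'}^*|} = \sqrt{|k^*|}$ as $\Q$-vector spaces, so $\dim_\Q (\sqrt{|{k'}^*|}) = n$ as well. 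Theorem \ref{leq d+n} applies to $k'$ and yields $\text{Br}_l\text{dim}(k') \leq \text{Br}_l\text{dim}(\tilde{k}) + n$.

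Next I would transfer this bound back to $k$ via the standard coprime-degree trick. Let $E/k$ be any finite extension and $A$ a central simple algebra over $E$ with $\text{per}(A)$ a power of $l$. Set $E' := E(\mu_l) = E \cdot k'$; this is a finite extension of both $E$ and $k'$, of degree $[E':E]$ dividing $l-1$ and therefore coprime to $l$. The identity $\text{cor}_{E'/E} \circ \text{res}_{E'/E} = [E':E] \cdot \id$ on $\text{Br}(E)$, applied to $[A]$, shows that $\text{per}(A)$ divides $[E':E] \cdot \text{per}(A_{E'})$; combined with the trivial divisibility $\text{per}(A_{E'}) \mid \text{per}(A)$ and the coprimality of $[E':E]$ with the $l$-power $\text{per}(A)$, this forces $\text{per}(A) = \text{per}(A_{E'})$. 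A parallel argument, using the classical fact that $\text{ind}(A)$ divides $[E':E] \cdot \text{ind}(A_{E'})$ together with $\text{ind}(A_{E'}) \mid \text{ind}(A)$ and the fact that $\text{ind}(A)$ is a power of $l$, gives $\text{ind}(A) = \text{ind}(A_{E'})$.

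Finally, since $E'/k'$ is a finite extension, the bound $\text{Br}_l\text{dim}(k') \leq \text{Br}_l\text{dim}(\tilde{k}) + n$ applied to $A_{E'}$ gives $\text{ind}(A_{E'}) \mid \text{per}(A_{E'})^{\text{Br}_l\text{dim}(\tilde{k}) + n}$. Substituting the equalities from the previous paragraph yields $\text{ind}(A) \mid \text{per}(A)^{\text{Br}_l\text{dim}(\tilde{k}) + n}$, which is exactly the required bound $\text{Br}_l\text{dim}(k) \leq \text{Br}_l\text{dim}(\tilde{k}) + n$. I do not expect any genuine obstacle: the argument is entirely formal once Theorem \ref{leq d+n} is in hand, and the only steps requiring minor care are the verification that $\widetilde{k'}/\tilde{k}$ is finite and the extraction of the $l$-primary parts in the index/period equalities, both of which are classical.
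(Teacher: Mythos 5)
Your proposal is correct and follows essentially the same route as the paper: adjoin a primitive $l$-th root of unity, use the coprime-degree invariance of period and index, and apply Theorem \ref{leq d+n} to $k(\mu_l)$. In fact your write-up is more careful than the paper's own two-line proof, since you verify that $k(\mu_l)$ satisfies the hypotheses of Theorem \ref{leq d+n} (completeness, unchanged rational rank, residue-field Brauer dimension) and you quantify over all finite extensions $E/k$ as the definition of $\mathrm{Br}_l\mathrm{dim}$ requires, whereas the paper only treats classes in ${}_l\mathrm{Br}(k)$ itself.
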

\begin{proof} 
Let $A \in\ _l\text{Br}(k)$. Let $\rho_l$ be the $l$-th primitive root of unity. So, the degree of the extension $k(\rho_l)$ over $k$ is at most $l-1$, i.e., $l \nmid [k(\rho_l) : k]$. Thus, ind($A$) $=$ ind($A \otimes k(\rho_l)$) and per($A$) $=$ per($A \otimes k(\rho_l)$), by \cite{P82}, Propositions 13.4(vi) and 14.4b(v). Hence the corollary follows from Theorem \ref{leq d+n}.

\end{proof}

\section{Function field of curves on complete ultrametric fields}

Recall the discussion at the end of Section 2 regarding the rational rank of abelian groups. In our situation, consider a complete ultrametric valued field $k$. Let $L$ be a valued field extension of $k$. Let $|L^*|$, $|k^*|$ be the value groups of $L$ and $k$, respectively. We thus have $\text{dim}_\Q (\sqrt{|L^*|}) = \text{dim}_\Q (\sqrt{|k^*|}) + \text{dim}_\Q (\frac{|L^*|}{|k^*|} \otimes_\Z \Q)$. If $L$ is an algebraic extension of $k$, then the quotient group $|L^*|/|k^*|$ is a torsion group. Thus $\text{dim}_\Q (|L^*|/|k^*| \otimes_\Z \Q) = 0$, which implies that the two divisorial closures $\sqrt{|L^*|}$ and $\sqrt{|k^*|}$ have the same dimension over $\Q$.

\begin{theorem} \label{leq 2^n+1}
    Let $k$ be a complete ultrametric field with $\text{char}(\Tilde{k}) \neq 2$. Suppose $\text{dim}_\Q (\sqrt{\mid k^* \mid}) = n$ is finite. Let $C$ be a curve over $k$ and $F = k(C)$ the function field of the curve. Let $q$ be a quadratic form over $F$ with dimension $d$. If $d > 2^{n+1} u_s(\Tilde{k})$, then $q$ is isotropic. In particular, $u(F) \leq 2^{n+1} u_s(\Tilde{k})$.
\end{theorem}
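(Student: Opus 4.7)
The plan is to deduce the statement from Mehmeti's local-global principle (Theorem~\ref{LGP Meh}) combined with the bound of Theorem~\ref{leq 2^n} applied to each local field. Let $X_q \subset \P^{d-1}_F$ be the projective quadric defined by $q$. We may assume $q$ is non-degenerate, since otherwise the radical supplies an isotropic vector; and the hypothesis $d > 2^{n+1} u_s(\Tilde{k}) \geq 2$ forces $d \geq 3$, so $X_q$ is smooth and $\mathrm{SO}(q)$ is a connected rational linear algebraic group acting strongly transitively on $X_q$ (by Witt's extension theorem). Theorem~\ref{LGP Meh} then reduces the problem to showing that $q$ is isotropic over each completion $F_v$ with $v \in V(F)$, and for this it is enough to establish $u(F_v) \leq 2^{n+1} u_s(\Tilde{k})$.

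Each $F_v$ is a complete ultrametric field, so Theorem~\ref{leq 2^n} gives $u(F_v) \leq 2^{m_v} u(\widetilde{F_v})$ with $m_v := \text{dim}_\Q(\sqrt{|F_v^*|})$; the task is therefore to bound $m_v$ and $u(\widetilde{F_v})$ simultaneously. The key tool is Abhyankar's inequality: if $v$ restricts non-trivially to $k$, then
\[
\text{tr.deg}_{\Tilde{k}}\bigl(\widetilde{F_v}\bigr) + \text{dim}_\Q\bigl(|F_v^*|/|k^*| \otimes_\Z \Q\bigr) \leq \text{tr.deg}_k(F) = 1,
\]
while if $v$ is trivial on $k$ then $F_v$ is a complete discretely valued field ($m_v = 1$) whose residue field is a finite extension of $k$.

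Three cases now arise. If $v$ is trivial on $k$, the residue field $K := \widetilde{F_v}$ is a finite extension of $k$, hence is itself a complete ultrametric field of rational rank $n$ (algebraic extensions preserve rational rank, by the discussion at the end of Section~2); two successive applications of Theorem~\ref{leq 2^n}, first to $F_v$ and then to $K$, yield $u(F_v) \leq 2 u(K) \leq 2 \cdot 2^n u(\widetilde{K}) \leq 2^{n+1} u_s(\Tilde{k})$, the final step because $\widetilde{K}$ is a finite extension of $\Tilde{k}$. If $v$ extends $|\cdot|_k$ and $\widetilde{F_v}/\Tilde{k}$ is algebraic, then $m_v \leq n+1$ and any anisotropic form over $\widetilde{F_v}$ descends to a finite subextension of $\Tilde{k}$, so $u(\widetilde{F_v}) \leq u_s(\Tilde{k})$ and hence $u(F_v) \leq 2^{n+1} u_s(\Tilde{k})$. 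Finally, if $v$ extends $|\cdot|_k$ and $\widetilde{F_v}/\Tilde{k}$ has transcendence degree $1$, then $m_v = n$ and $u(\widetilde{F_v}) \leq 2 u_s(\Tilde{k})$ by the very definition of $u_s$, giving $u(F_v) \leq 2^n \cdot 2 u_s(\Tilde{k}) = 2^{n+1} u_s(\Tilde{k})$. The ``in particular'' clause is immediate.

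The main obstacle is the trivial-on-$k$ case: there $F_v$ itself has only rational rank $1$, and the full factor $2^{n+1}$ emerges only after two successive applications of Theorem~\ref{leq 2^n}, once to $F_v$ and once to its residue field, unwinding each layer of ultrametric structure separately. Beyond that, the argument is bookkeeping of rational ranks and transcendence degrees via Abhyankar, together with the standard fact that $\mathrm{SO}(q)$ acts strongly transitively on the smooth quadric to license Mehmeti's local-global principle.
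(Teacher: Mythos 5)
Your proposal is correct and follows essentially the same route as the paper: reduce to isotropy over each completion $F_v$ via Mehmeti's local-global principle, then bound $u(F_v)$ case by case using Theorem~\ref{leq 2^n} together with Abhyankar's inequality to control the rational rank of $|F_v^*|$ and the transcendence degree of $\widetilde{F_v}/\Tilde{k}$. The only cosmetic differences are that you merge the two $t=0$ subcases, substitute a second application of Theorem~\ref{leq 2^n} for the paper's appeal to Springer's theorem in the trivial-on-$k$ case, and spell out the strong transitivity hypothesis needed to invoke the local-global principle.
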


\begin{proof}
Let $V(F)$ be the set of all non-trivial rank 1 valuations of $F$ which either extend the valuation of $k$ or are trivial when restricted to $k$. Let $F_v$ denote the completion of $F$ with respect to $v$. We will first show that for a quadratic form $q$ over $F$ with $\text{dim} (q) > 2^{n+1} u_s(\Tilde{k})$, $q$ is isotropic over $F_v$ for all $v \in V(F)$. Then we will apply Corollary 3.19 from \cite{Meh19} to say that $q$ is isotropic over $F$.

    Since we want to say that $q$ is isotropic over $F_v$ for all $v \in V(F)$, we have two cases. If $v \in V(F)$ is such that $v$ restricted to $k$ is the trivial valuation, then $F_v$ is a discrete valued field. Let $\widetilde{F_v}$ be the residue field of $F_v$. By Springer’s theorem on nondyadic complete discrete valuation fields (see \cite{Lam}, VI.1.10 and XI.6.2(7)), we have $u(F_v) = 2u(\widetilde{F_v})$. Since $\widetilde{F_v}$ is a finite extension of $k$, $\widetilde{F_v}$ is a complete ultrametric field with residue field $\widetilde{\widetilde{F_v}}$ a finite field extension of $\Tilde{k}$. Using the fact that $\widetilde{F_v}$ is an algebraic extension of $k$, then the quotient group $|\widetilde{F_v}^*|/|k^*|$ is a torsion group. Thus $\text{dim}_\Q (|\widetilde{F_v}^*|/|k^*| \otimes_\Z \Q) = 0$, which in turn implies that the two divisorial closures $\sqrt{|\widetilde{F_v}^*|}$ and $\sqrt{|k^*|}$ have the same dimension over $\Q$. It then follows from Theorem \ref{leq 2^n} that $u(\widetilde{F_v}) \leq 2^n u(\widetilde{\widetilde{F_v}}) \leq 2^n u_s (\Tilde{k})$. Thus, $u(F_v) \leq 2^{n+1} u_s(\Tilde{k})$.

         In the case that $v$ restricted to $k$ is the valuation on $k$, then $\widetilde{F_v}$ is an extension of $\Tilde{k}$ of transcendence degree $\leq 1$. Let $ \displaystyle s = \text{dim}_\Q \Big( \frac{\mid F_v^* \mid}{\mid k^* \mid} \otimes_\Z \Q \Big)$ and $t = \text{tr deg}_{\Tilde{k}} (\widetilde{F_v})$. By Abhyankar's inequality \cite{Abh}, we have $0 \leq s + t \leq 1$. 

         Suppose $t=0$. Then $\widetilde{F_v}$ is a finite extension of $\Tilde{k}$, and hence $u(\widetilde{F_v}) \leq u_s(\Tilde{k})$. Since $t=0$ and $0 \leq s+t \leq 1$, we have $s \leq 1$. When $s=0$, the discussion before Theorem 4.1 tells us that, $\text{dim}_\Q (\sqrt{|F_v^*|}) = \text{dim}_\Q (\sqrt{|k^*|}) + \text{dim}_\Q (\frac{|F_v^*|}{|k^*|} \otimes_\Z \Q) = \text{dim}_\Q (\sqrt{|k^*|}) + s$. Thus $\text{dim}_\Q (\sqrt{|F_v^*|}) = \text{dim}_\Q (\sqrt{|k^*|}) = n$. From Theorem \ref{leq 2^n}, we thus have $u(F_v) \leq 2^n u(\widetilde{F_v}) \leq 2^n u_s(\Tilde{k})$. When $s=1$, again the discussion before Theorem 4.1 implies that, $\text{dim}_\Q (\sqrt{|F_v^*|}) = \text{dim}_\Q (\sqrt{|k^*|}) + s$. Thus $\text{dim}_\Q (\sqrt{|F_v^*|}) = n + 1$. Applying Theorem \ref{leq 2^n} again, we have $u(F_v) \leq 2^{n+1} u(\widetilde{F_v}) \leq 2^{n+1} u_s(\Tilde{k})$. 
         
         For the case when $t=1$ and $s=0$, the extension $\widetilde{F_v}$ over $\Tilde{k}$ is finitely generated of transcendence degree $1$. Thus, $u(\widetilde{F_v}) \leq 2 u_s(\Tilde{k})$. Once again applying Theorem \ref{leq 2^n}, we get that $u(F_v) \leq 2^n u(\widetilde{F_v}) \leq 2^{n+1} u_s(\Tilde{k})$.
       
    Now applying \ref{LGP Meh}, we get that $q$ is isotropic over $F$. This proves the claim in the theorem that $u(F) \leq 2^{n+1} u_s (\Tilde{k})$.
\end{proof}

\begin{cor}
   Let $k$ be a complete ultrametric field with $\text{char}(\Tilde{k}) \neq 2$. Suppose $\text{dim}_\Q (\sqrt{\mid k^* \mid}) = n$ is finite. Then $u_s(k) \leq 2^n u_s(\Tilde{k})$.
\end{cor}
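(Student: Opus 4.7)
The plan is to unwind the definition of $u_s$ and check the two defining conditions separately, using Theorem \ref{leq 2^n} for finite extensions and Theorem \ref{leq 2^n+1} for transcendence-degree-one extensions. Recall that $u_s(k)$ is the smallest real number $m$ such that (i) $u(E) \leq m$ for every finite extension $E/k$, and (ii) $u(E) \leq 2m$ for every finitely generated extension $E/k$ of transcendence degree $1$. So it suffices to produce the bound $2^n u_s(\Tilde{k})$ in case (i) and $2^{n+1} u_s(\Tilde{k})$ in case (ii).

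For case (i), let $E/k$ be a finite field extension. Since $k$ is complete, the valuation on $k$ extends uniquely to $E$ and $E$ is again a complete ultrametric field. Because $E/k$ is algebraic, $|E^*|/|k^*|$ is a torsion group, so by the discussion at the beginning of Section 4 the divisible closures satisfy $\text{dim}_\Q \sqrt{|E^*|} = \text{dim}_\Q \sqrt{|k^*|} = n$. The residue field $\widetilde{E}$ is a finite extension of $\Tilde{k}$, so $u(\widetilde{E}) \leq u_s(\Tilde{k})$ by definition of $u_s(\Tilde{k})$. Applying Theorem \ref{leq 2^n} to $E$ then gives $u(E) \leq 2^n u(\widetilde{E}) \leq 2^n u_s(\Tilde{k})$, as required.

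For case (ii), let $E/k$ be a finitely generated extension of transcendence degree $1$. Then $E$ is a finite extension of $k(T)$, hence is the function field $k(C)$ of a (normal, projective) curve $C$ over $k$. Theorem \ref{leq 2^n+1} directly yields $u(E) \leq 2^{n+1} u_s(\Tilde{k}) = 2 \cdot 2^n u_s(\Tilde{k})$. Combining the two cases, $m = 2^n u_s(\Tilde{k})$ satisfies both defining conditions for $u_s(k)$, and therefore $u_s(k) \leq 2^n u_s(\Tilde{k})$.

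There is no substantial obstacle here: the result is essentially a repackaging of Theorems \ref{leq 2^n} and \ref{leq 2^n+1} through the definition of the strong $u$-invariant. The only mildly delicate point is to confirm that rational rank is preserved under finite extensions in case (i), which was already spelled out in the Section 4 preamble.
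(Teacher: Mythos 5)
Your proposal is correct and follows essentially the same route as the paper: both arguments verify the two defining conditions of the strong $u$-invariant separately, using Theorem \ref{leq 2^n} for finite extensions (after noting that the rational rank of the value group is unchanged because $|E^*|/|k^*|$ is torsion) and Theorem \ref{leq 2^n+1} for finitely generated extensions of transcendence degree one, viewed as function fields of curves over $k$.
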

\begin{proof}
    Let $K/k$ be a finite field extension. Then $K$ is an ultrametric field and $\Tilde{K}/ \Tilde{k}$ is a finite extension. Let $q$ be a $K$-quadratic form of dimension $d > 2^n u_s(\Tilde{k})$. Since $\text{char}(\Tilde{k}) \neq 2$, we may assume $q$ to be diagonal. Further, $\text{dim}_\Q (\sqrt{|K^*|}) = \text{dim}_\Q (\sqrt{|k^*|}) = n$ is finite. This equality follows from the discussion before Theorem 4.1, and noting that since $K$ is a finite extension of $k$, the quotient group $|K^*|/|k^*|$ is torsion. Then applying Theorem \ref{leq 2^n}, we have $u(K) \leq 2^n u(\Tilde{K}) \leq 2^n u_s (\Tilde{k})$.

    Let $C$ be a curve over $k$ and $F = k(C)$ the function field of $C$. Then it follows from Theorem \ref{leq 2^n+1} that $u(F) \leq 2^{n+1} u_s(\Tilde{k})$. Then by the definition of strict $u$-invariant, we have that $u_s(k) \leq 2^n u_s(\Tilde{k})$.
\end{proof}

\begin{theorem} \label{leq d+n+1}
    Let $k$ be a complete ultramteric field. Suppose $\text{dim}_\mathbb{Q} (\sqrt{|k^*|}) = n$ is finite. Let $l$ be a prime such that $l \neq \text{char}(\Tilde{k})$. Let $C$ be a curve over $k$ and $F = k(C)$ the function field of the curve. Suppose there exist an integer $d$ such that $\text{Br}_l \text{dim} (\Tilde{k}) \leq d$ and $\text{Br}_l \text{dim} (\Tilde{k}(T)) \leq d+1$. Then the $\text{Br}_l \text{dim} (F) \leq d+1+n$.
\end{theorem}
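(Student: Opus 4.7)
The plan is to mimic the proof of Theorem \ref{leq 2^n+1}, replacing Theorem \ref{leq 2^n} by Theorem \ref{leq d+n} and Springer's theorem by the classical bound $\mathrm{Br}_l\mathrm{dim}(K) \leq \mathrm{Br}_l\mathrm{dim}(\widetilde{K}) + 1$ for a complete discretely valued field $K$ of residue characteristic prime to $l$. A preliminary reduction: given any finite extension $L/F$, the algebraic closure $k_L$ of $k$ in $L$ is a finite extension of $k$, hence again a complete ultrametric field with $\dim_{\mathbb{Q}}\sqrt{|k_L^*|} = n$ whose residue field $\widetilde{k_L}$ is a finite extension of $\widetilde{k}$. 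The hypotheses on $\widetilde{k}$ and $\widetilde{k}(T)$ pass to $\widetilde{k_L}$ and $\widetilde{k_L}(T)$, and $L$ is the function field of a curve over $k_L$. So it suffices to show $\mathrm{ind}(A) \mid \mathrm{per}(A)^{d+1+n}$ for a single central simple algebra $A$ of $l$-power period defined over $F = k(C)$.

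By Theorem \ref{local-global index}, this reduces to proving $\mathrm{ind}(A \otimes_F F_v) \mid \mathrm{per}(A)^{d+1+n}$ for every $v$ in the set $V(F)$ of nontrivial rank-one valuations of $F$. I would split into the two cases used in Theorem \ref{leq 2^n+1}. If $v|_k$ is trivial then $F_v$ is a complete discretely valued field whose residue field $\widetilde{F_v}$ is a finite extension of $k$; Theorem \ref{leq d+n} gives $\mathrm{Br}_l\mathrm{dim}(\widetilde{F_v}) \leq d+n$, and the classical bound then gives $\mathrm{Br}_l\mathrm{dim}(F_v) \leq d+n+1$. If instead $v|_k$ is the given valuation of $k$, Abhyankar's inequality forces $s+t \leq 1$, where $s = \dim_{\mathbb{Q}}(|F_v^*|/|k^*| \otimes_{\mathbb{Z}} \mathbb{Q})$ and $t = \mathrm{trdeg}_{\widetilde{k}}\widetilde{F_v}$. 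The discussion preceding Theorem \ref{leq 2^n+1} yields $\dim_{\mathbb{Q}}\sqrt{|F_v^*|} = n+s$, while monotonicity of $\mathrm{Br}_l\mathrm{dim}$ under finite extension gives $\mathrm{Br}_l\mathrm{dim}(\widetilde{F_v}) \leq d$ when $t=0$ (since $\widetilde{F_v}$ is then a finite extension of $\widetilde{k}$) and $\mathrm{Br}_l\mathrm{dim}(\widetilde{F_v}) \leq d+1$ when $t=1$ (since $\widetilde{F_v}$ is then a finite extension of $\widetilde{k}(T)$). Applying Theorem \ref{leq d+n} to the complete ultrametric field $F_v$ of rational rank $n+s \leq n+1$ then yields $\mathrm{Br}_l\mathrm{dim}(F_v) \leq d+n+1$ in every subcase.

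Combining the cases and invoking Theorem \ref{local-global index} gives $\mathrm{ind}(A) \mid \mathrm{per}(A)^{d+n+1}$, as required. The main obstacle is the bookkeeping for Case 2: one must verify in each subcase that $F_v$ is indeed a complete ultrametric field of the claimed rational rank $n+s$ and that $\widetilde{F_v}$ has the claimed transcendence structure over $\widetilde{k}$, so that the two hypotheses on $\mathrm{Br}_l\mathrm{dim}(\widetilde{k})$ and $\mathrm{Br}_l\mathrm{dim}(\widetilde{k}(T))$ can be deployed in exactly the right subcase. Once that is in place, the argument is a direct combination of Theorem \ref{local-global index}, Theorem \ref{leq d+n}, and the classical residue-field bound, paralleling the proof of Theorem \ref{leq 2^n+1} line for line.
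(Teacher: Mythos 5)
Your proposal is correct and follows essentially the same route as the paper's proof: reduce via Theorem \ref{local-global index} to bounding $\mathrm{Br}_l\mathrm{dim}(F_v)$ for each $v \in V(F)$, split on whether $v|_k$ is trivial, and in the nontrivial case use Abhyankar's inequality together with Theorem \ref{leq d+n} and the classical complete-discretely-valued bound in each subcase. Your explicit preliminary reduction handling finite extensions $L/F$ (via the algebraic closure of $k$ in $L$) is a point the paper leaves implicit, but it does not change the argument.
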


\begin{proof}
    Let $V(F)$ be the set of all non-trivial rank 1 valuations of $F$ which either extend the valuation of $k$ or are trivial when restricted to $k$. Let $F_v$ denote the completion of $F$ with respect to $v$. 

    If $v \in V(F)$ is such that $v$ restricted to $k$ is the trivial valuation, then $F_v$ is a discrete valued field. Let $\widetilde{F_v}$ be the residue field of $F_v$. Since $\widetilde{F_v}$ is a finite extension of $k$, it is a complete ultrametric field with residue field $\widetilde{\widetilde{F_v}}$ a finite extension of $\Tilde{k}$. By the definition of $\text{Br}_l \text{dim}$, we have that $\text{Br}_l \text{dim}(\widetilde{\widetilde{F_v}}) \leq \text{Br}_l \text{dim}(\Tilde{k})$. Note that $F_v$ is a complete discretely valued field, thus $\text{Br}_l \text{dim} (F_v) \leq \text{Br}_l \text{dim}(\widetilde{F_v}) + 1$, which follows from \cite[Theorem 5.5]{HHK09}. Now utilising the fact that $\widetilde{F_v}$ is an algebraic extension of $k$, the quotient group $|\widetilde{F_v}^*|/|k^*|$ is a torsion group. Thus, $\text{dim}_\Q (|\widetilde{F_v}^*|/|k^*| \otimes_\Z \Q) = 0$, which in turn implies that the two divisorial closures $\sqrt{|\widetilde{F_v}^*|}$ and $\sqrt{|k^*|}$ have the same dimension over $\Q$. Applying Theorem \ref{leq d+n}
 to $\widetilde{F_v}$, we have that $\text{Br}_l \text{dim} (\widetilde{F_v}) \leq \text{Br}_l \text{dim} (\widetilde{\widetilde{F_v}}) + n$. Putting it all together gives $\text{Br}_l \text{dim}(F_v) \leq \text{Br}_l \text{dim}(\widetilde{F_v}) + 1 \leq \text{Br}_l \text{dim} (\widetilde{\widetilde{F_v}}) + n + 1 \leq \text{Br}_l \text{dim}(\Tilde{k}) + n + 1 \leq d+n+1$.

    In the case that $v$ restricted to $k$ is the valuation on $k$, then $\widetilde{F_v}$ is an extension of $\Tilde{k}$ of transcendence degree $\leq 1$. Let $ \displaystyle s = \text{dim}_\Q \Big( \frac{\mid F_v^* \mid}{\mid k^* \mid} \otimes_\Z \Q \Big)$ and $t = \text{tr deg}_{\Tilde{k}} (\widetilde{F_v})$. By Abhyankar's inequality \cite{Abh}, we have $0 \leq s + t \leq 1$. 

    Suppose $t=0$. Then $\widetilde{F_v}$ is a finite extension if $\Tilde{k}$. Thus, $\text{Br}_l \text{dim}(\widetilde{F_v}) \leq \text{Br}_l \text{dim}(\Tilde{k})$. Since $t=0$ and $0 \leq s+t \leq 1$, we have $s \leq 1$. When $s=0$, the discussion at the beginning of Section 4 implies that $\text{dim}_\Q (\sqrt{|F_v^*|}) = \text{dim}_\Q (\sqrt{|k^*|}) + \text{dim}_\Q (\frac{|F_v^*|}{|k^*|} \otimes_\Z \Q) = \text{dim}_\Q (\sqrt{|k^*|}) + s$. Thus $\text{dim}_\Q (\sqrt{|F_v^*|}) = \text{dim}_\Q (\sqrt{|k^*|}) = n$. Applying Theorem \ref{leq d+n} now gives that, $\text{Br}_l \text{dim}(F_v) \leq \text{Br}_l \text{dim}(\widetilde{F_v}) + n \leq \text{Br}_l \text{dim}(\Tilde{k}) + n \leq d+n$. %When $s=1$, $|F_v^*|$ is generated by $|k^*|$ and $\rho$, where $\rho \in \mathbb{R}_{> 0}-\sqrt{|k^*|}$. Thus, $\text{dim}_\Q (\sqrt{|F_v^*|}) = n+1$. 
    When $s=1$, once again the discussion at the beginning of Section 4 tells us that, $\text{dim}_\Q (\sqrt{|F_v^*|}) = \text{dim}_\Q (\sqrt{|k^*|}) + s$. Thus $\text{dim}_\Q (\sqrt{|F_v^*|}) = n + 1$. By Theorem \ref{leq d+n}, $\text{Br}_l \text{dim}(F_v) \leq \text{Br}_l \text{dim}(\widetilde{F_v}) + \text{dim}_\Q (\sqrt{|F_v^*|}) \leq \text{Br}_l \text{dim}(\Tilde{k}) + n+1 \leq d+n+1$.

         For the case when $t=1$ and $s=0$, the extension $\widetilde{F_v}$ over $\Tilde{k}$ is finitely generated of transcendence degree $1$, \textit{i.e.,} $\widetilde{F_v}$ is a finite extension of $\Tilde{k}(T)$. % \textcolor{red}{(should this be $\Tilde{k}(T)$?)}
         So, $\text{Br}_l \text{dim}(\widetilde{F_v}) \leq \text{Br}_l \text{dim} (\Tilde{k}(T))$. By Theorem \ref{leq d+n}, $\text{Br}_l \text{dim}(F_v) \leq \text{Br}_l \text{dim}(\widetilde{F_v}) + n \leq \text{Br}_l \text{dim}(\Tilde{k}(T)) + n \leq d+1+n$.

         Thus for all $v \in V(F)$, we have $\text{Br}_l \text{dim}(F_v) \leq d+n+1$. From Theorem \ref{local-global index}, we see that $\text{Br}_l \text{dim}(F) \leq \text{Br}_l \text{dim}(F_v)$, proving the claim of the theorem.

\end{proof}

\providecommand{\bysame}{\leavevmode\hbox to3em{\hrulefill}\thinspace}

\end{document}